\newtheorem{theorem}{Theorem}[section]
\newtheorem{lemma}[theorem]{Lemma}
\newtheorem{proposition}[theorem]{Proposition}
\newtheorem{corollary}[theorem]{Corollary}   
\newtheorem{definition}[theorem]{Definition}
\newtheorem{example}[theorem]{Example}
\newtheorem{remark}[theorem]{Remark}
\numberwithin{equation}{section}
\begin{document}
\title{Binomial edge ideals of Clutters}
\author{
Kamalesh Saha
\and
Indranath Sengupta
}
\date{}

\address{\small \rm  Discipline of Mathematics, IIT Gandhinagar, Palaj, Gandhinagar, 
Gujarat 382355, INDIA.}
\email{kamalesh.saha@iitgn.ac.in}

\address{\small \rm  Discipline of Mathematics, IIT Gandhinagar, Palaj, Gandhinagar, 
Gujarat 382355, INDIA.}
\email{indranathsg@iitgn.ac.in}
\thanks{The third author is the corresponding author; supported by the 
MATRICS research grant MTR/2018/000420, sponsored by the SERB, Government of India.}

\date{}

\subjclass[2020]{Primary 05C70, 05C25, 13F65.}

\keywords{Clutters, binomial edge ideals, gluing.}

\allowdisplaybreaks

\begin{abstract}
In this paper, we introduce the notion of binomial edge ideals of 
a clutter and obtain results similar to those obtained 
for graphs by Rauf \& Rinaldo in \cite{raufrin}. We also 
answer a question posed in their paper.
\end{abstract}

\maketitle

\section{Introduction}
The notion of edge ideals of simple graphs 
was introduced by Villarreal in \cite{vi} and the Cohen-Macaulay property 
was studied in great detail. Subsequently, many authors have worked on similar 
problems, for example \cite{hh}, \cite{sivavi}. Later the edge 
ideal was generalized for clutters and it was observed that the 
square free monomial ideals are in one to one corresponds with 
set of clutters (see \cite{vil}). Binomial edge ideals of graphs 
was introduced independently in 
\cite{hhhrkara} and \cite{oh} in 2010. In \cite{hhhrkara}, \cite{crri} 
and \cite{enhh}, some connections were established between the algebraic 
properties of binomial edge ideals and combinatorial properties of the 
underlying graph. In \cite{enhh}, the Cohen-Macaulay property of some 
special graphs were studied. Full classification of Cohen-Macaulay 
binomial edge ideals in terms of the underlying graph is still a wide 
problem. 
\medskip

Some constructions of Cohen-Macaulay binomial edge ideals using gluing 
of graphs and cone on graphs have been done in \cite{raufrin}. Our aim 
in this article is to show that a similar construction can be carried 
out for clutters after defining the equivalent notions for clutters. 
In this article, we introduce the notion of binomial edge ideals of 
a clutter and obtain results similar to those obtained for graphs 
in \cite{raufrin} and finally answer a question 
posed in  \cite{raufrin}. The paper is arranged in the following fashion: 
We first introduce a few basic concepts in the 
next section, e.g. path in a clutter, connected 
clutter, cut point of a clutter and finally define binomial edge ideals for clutters. In this section we also established some properties regarding the binomial edge ideals of clutters and introduce some new notations. 
In section 3, we introduce the notion of gluing for clutters and 
establish similar results as in \cite{raufrin}. We show that if $\mathcal{C}$ 
is the gluing of the clutters $\mathcal{C}_{1}$ and $\mathcal{C}_{2}$, then $J_{\mathcal{C}}$ is 
Cohen-Macaulay if and only if $J_{\mathcal{C}_{1}}$ and $J_{\mathcal{C}_{2}}$ 
are both Cohen-Macaulay. We finally generalize the gluing concept for 
more than two clutters. In section 4, we introduce the notion of cone 
on a clutter and prove similar results as in \cite{raufrin} for cones 
on graphs. There is an open question in \cite{raufrin}, whether the 
converse of the Theorem 3.8 (\cite{raufrin}) is true or not. In this paper, 
we prove that the converse is true for graphs as well as clutters 
in Theorem \ref{coneconverseraufrin}. After writing the paper, we 
came to know that recently the same question has also been answered in 
\cite{bms} for graphs. However, we have proved it for a more general concept 
of clutters and our main motivation has been to generalise the results of 
\cite{raufrin} in the context of clutters.

\section{Preliminaries}
Let $S = K[x_{1},\ldots,x_{n},y_{1},\ldots,y_{n}]$ be the polynomial ring in $2n$ variables with coefficients in a field $K$. Let $G$ be a graph on the vertex set $[n] = \{1, 2, \ldots, n\}$. 
For each edge $\{i,j\}$ of $G$, with $i < j$, we associate the binomial 
$f_{ij}=x_{i}y_{j}-x_{j}y_{i}$. The ideal $J_{G}$ of $S$ generated by 
$f_{ij}$ such that $i < j$ , is called the \textit{binomial edge ideal} of $G$. A binomial edge ideal of a graph has a natural determinantal structure in the sense that it can be seen as an ideal generated by a set of $2\times 2$-minors of a $2 \times n$ matrix $X$ of indeterminates. For example, the ideal generated by 
all $2\times 2$-minors of $X$ is the binomial edge ideal of the complete 
graph on $[n]$. 

\begin{definition}\label{clutter} {\rm 
A \textit{clutter} $\mathcal{C}$ on a finite set $V(\mathcal{C})$, 
called the vertex set, is a collection $E(\mathcal{C})$, called 
the edge set, of subsets of $V(\mathcal{C})$ with the property 
that $e_{1}\not\subseteq e_{2}$ for all $e_{1},e_{2}\in E(\mathcal{C})$.
}
\end{definition}

We now introduce the notions of a binomial edge ideal, a path, 
a clique and the associated graph for a clutter. These definitions 
are inspired by similar concepts associated to graphs and have 
been used extensively in our work generalising the results proved 
in the paper \cite{raufrin}.

\begin{definition}\label{beic}{\rm 
Let $S = K[x_{1},\ldots,x_{n},y_{1},\ldots,y_{n}]$ be
the polynomial ring in $2n$ variables with coefficients in a field $K$. Let $\mathcal{C}$ be a clutter on the vertex set $[n]$. For each edge 
$e\in E(\mathcal{C})$ and each pair $i,j\in e$ with $i<j$, we associate a binomial $f_{ij}=x_{i}y_{j}-x_{j}y_{i}$. Consider the ideal $J_{\mathcal{C}}$ of $S$ generated by $f_{ij}=x_{i}y_{j}-x_{j}y_{i}$ such that $i<j$. We will call this ideal $J_{\mathcal{C}}$, \textit{the binomial edge ideal} of $\mathcal{C}$.
}
\end{definition}

\begin{definition}\label{cliq}{\rm 
Let $\mathcal{C}$ be a clutter with vertex set $V(\mathcal{C})$ and 
edge set $E(\mathcal{C})$. A subset $D\subset V(\mathcal{C})$ is 
called a \textit{clique} of $\mathcal{\mathcal{C}}$ if for all 
$i$ and $j$ belonging to $D$, with $i\not =j$, there exists an edge 
$e\in E(\mathcal{C})$ containing $i$ and $j$. Note that every edge 
of a clutter is a clique and also singletons are cliques. If 
$V(\mathcal{C})$ itself is a clique, then we call $\mathcal{C}$ a 
complete clutter.
}
\end{definition}

\begin{example}\label{cliqex}{\rm 
Let $\mathcal{C}$ be a clutter with the vertex set $V(\mathcal{C})=\{1,2,3,4,5,6\}$ and the edge set $E(\mathcal{C})=\{\{1,2,4\}, \{2,4,6\}, \{4,5\}, \{1,3,6\}\}$. Then $D=\{1,2,4,6\}$ is a clique and it is also maximal.
}
\end{example}

\begin{definition}\label{path}{\rm 
A \textit{path} in a clutter is a sequence of vertices $\{i_{1}, i_{2}, \ldots , i_{m}\}$, with the property that each vertex $i_{j}$ is adjacent to the vertex 
$i_{j+1}$. Adjacency of two vertices means that there exists an edge containing 
those vertices. A clutter is said to be \textit{connected} if for every pair of 
vertices $l\neq k$, there exists a path $\{i_{1}, i_{2}, \ldots , i_{k}\}$, such that $i_{1} = l$ and $i_{m}=k$.
}
\end{definition}

\begin{definition}\label{assograph}{\rm 
For a clutter $\mathcal{C}$, let $G^{\mathcal{C}}$ be the graph with the 
vertex set $V(G^{\mathcal{C}}) = V(\mathcal{C})$ and the edge set $E(G^{\mathcal{C}})= \{\{i,j\} \mid \exists \, e\in E(\mathcal{C}); \{i,j\}\subseteq e\}$. The graph $G^{\mathcal{C}}$ is called 
\textit{the associated graph} of $\mathcal{C}$.
}
\end{definition}

\begin{remark}\label{re1}
Note that $J_{\mathcal{C}}=J_{G^{\mathcal{C}}}$.
\end{remark} 

\begin{proposition}\label{prop1}
A clutter $\mathcal{C}$ is connected if and only if the associated graph $G^{\mathcal{C}}$ is connected.
\end{proposition}

\begin{proof}
Let $\mathcal{C}$ be a connected clutter and $i,j\in V(G^{\mathcal{C}})=V(\mathcal{C})$. There 
exists a path in the clutter between $i,j$, since 
$\mathcal{C}$ is connected, and this is a path between 
$i,j$ in $G^{\mathcal{C}}$ by the construction of 
$G^{\mathcal{C}}$. Hence $G^{\mathcal{C}}$ is connected graph.
\medskip

Conversely, let $G^{\mathcal{C}}$ be connected and $x_{1},x_{n}\in V(\mathcal{C})=V(G^{\mathcal{C}})$. Since $G^{\mathcal{C}}$ is 
connected, there exists a path between $x_{1},x_{n}$ in 
$G^{\mathcal{C}}$, say $x_{1},x_{2},\ldots,x_{n}$. Now 
$\{ x_{i},x_{i+1}\}\in E(G^{\mathcal{C}})$ for all 
$i\in\{ 1,2\ldots n-1\}$. By the definition of 
$G^{\mathcal{C}}$, there exists an edge in 
$E(\mathcal{C})$ containing $x_{i},x_{i+1}$ for 
each $i\in\{ 1,2\ldots n-1\}$. Therefore, $x_{i},x_{i+1}$ 
are adjacent for all $i\in\{ 1,2\ldots n-1\}$, i.e., 
$x_{1},x_{2},\ldots,x_{n}$ is a path in $\mathcal{C}$ also. 
Hence, $\mathcal{C}$ is connected.
\end{proof} 

\begin{definition}\label{newclutter}{\rm  
Let $\mathcal{C}$ be a clutter and $v\in V(\mathcal{C})$. Consider 
a new clutter, denoted by $\mathcal{C}/v$, such that 
$V(\mathcal{C}/v)= V(\mathcal{C})\setminus\{ v\}$ and 
$E(\mathcal{C}/v)$ is defined as the set containing the maximal elements 
of the set $\{ e\setminus\{ v\} \mid e\in E(\mathcal{C})\}$. 
It follows from the definition that 
$G^{\mathcal{C}/v}=G^{\mathcal{C}}\setminus\{v\}$.
}
\end{definition}

\begin{definition}\label{cutp}{\rm 
Let $\mathcal{C}$ be a clutter and $v\in V(\mathcal{C})$. Then $v$ is said 
to be a \textit{cut point} of $\mathcal{C}$ if number of connected components in $G^{\mathcal{C}/v}$ is more than the number of connected components of 
$G^{\mathcal{C}}$.
}
\end{definition}

\begin{proposition}\label{prop2}
$v$ is a cut point of a clutter $\mathcal{C}$ if and only if $v$ is a 
cut point of $G^{\mathcal{C}}$.
\end{proposition}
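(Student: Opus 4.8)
The plan is to reduce the statement entirely to the definition of cut point (Definition \ref{cutp}), which is already phrased in terms of the associated graph $G^{\mathcal{C}}$. The key observation I would exploit is the identity $G^{\mathcal{C}/v}=G^{\mathcal{C}}\setminus\{v\}$ recorded in Definition \ref{newclutter}, together with the standard notion of a cut point of a graph. Recall that $v$ is a cut point of a graph $G$ precisely when deleting $v$ increases the number of connected components, i.e. the number of components of $G\setminus\{v\}$ exceeds the number of components of $G$.

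First I would unwind Definition \ref{cutp}: $v$ is a cut point of the clutter $\mathcal{C}$ if and only if the number of connected components of $G^{\mathcal{C}/v}$ is strictly greater than the number of connected components of $G^{\mathcal{C}}$. Next I would substitute the identity $G^{\mathcal{C}/v}=G^{\mathcal{C}}\setminus\{v\}$ from Definition \ref{newclutter}, so that the condition becomes: the number of connected components of $G^{\mathcal{C}}\setminus\{v\}$ is strictly greater than the number of connected components of $G^{\mathcal{C}}$. Finally I would observe that this last condition is, verbatim, the definition of $v$ being a cut point of the graph $G^{\mathcal{C}}$. Chaining these three equivalences gives the desired biconditional.

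Since each implication is a direct application of a definition plus the substitution, I do not anticipate a genuine obstacle here; the proof is essentially a one-line unwinding. The only point requiring mild care is making sure the identity $G^{\mathcal{C}/v}=G^{\mathcal{C}}\setminus\{v\}$ is invoked correctly, namely that deleting the vertex $v$ from the clutter and then passing to the associated graph yields the same graph as deleting $v$ directly from $G^{\mathcal{C}}$; this is precisely the content of the closing remark in Definition \ref{newclutter}, so it may be cited rather than reproved. Hence the entire argument collapses to reading off the definitions of cut point for clutters and for graphs side by side and noting they coincide once the graph identity is applied.
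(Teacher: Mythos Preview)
Your argument is correct and in fact slightly more direct than the paper's own proof. The paper first reduces to the case where $\mathcal{C}$ is connected and then passes through connectedness of the clutter $\mathcal{C}/v$ itself, invoking Proposition~\ref{prop1} to translate between clutter-connectedness and graph-connectedness before applying the identity $G^{\mathcal{C}/v}=G^{\mathcal{C}}\setminus\{v\}$. Your approach sidesteps this detour: since Definition~\ref{cutp} is already formulated in terms of the component counts of $G^{\mathcal{C}}$ and $G^{\mathcal{C}/v}$, the substitution $G^{\mathcal{C}/v}=G^{\mathcal{C}}\setminus\{v\}$ immediately yields the graph-theoretic cut-point condition for $G^{\mathcal{C}}$, with no need for Proposition~\ref{prop1} or any reduction to the connected case. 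Both arguments are essentially one-liners, but yours uses strictly less machinery.
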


\begin{proof}
It is enough to prove this result for connected clutters. 
Let $\mathcal{C}$ be a connected clutter. Then, $v$ is a 
cut point of $\mathcal{C}$ if and only if $\mathcal{C}/v$ 
is not connected, if and only if $G^{\mathcal{C}/v}=G^{\mathcal{C}}\setminus\{v\}$ is not connected (by Proposition \ref{prop1}), if and only if 
$v$ is a cut point of $G^{\mathcal{C}}$.
\end{proof}

Let $\mathcal{C}_{1}$ and $\mathcal{C}_{2}$ be two clutters. We set $\mathcal{C}=\mathcal{C}_{1}\cup \mathcal{C}_{2}$ (resp.  $\mathcal{C}=\mathcal{C}_{1}\sqcup \mathcal{C}_{2}$; the disjoint union of $\mathcal{C}_{1}$ and $\mathcal{C}_{2}$) where $\mathcal{C}$ is the clutter with $V(\mathcal{C}) = V(\mathcal{C}_{1})\cup V(\mathcal{C}_{2})$ (resp. $V(\mathcal{C}) = V(\mathcal{C}_{1})\sqcup V(\mathcal{C}_{2})$) and $E(\mathcal{C})$ is the set containing the maximal elements 
of $E(\mathcal{C}_{1})\cup E(\mathcal{C}_{2})$ (resp. $E(\mathcal{C}) = E(\mathcal{C}_{1})\sqcup E(\mathcal{C}_{2})$).

\begin{proposition}\label{prop3}
Let $\mathcal{C},\mathcal{C}_{1},\mathcal{C}_{2}$ be clutters. If 
$\mathcal{C}=\mathcal{C}_{1}\cup \mathcal{C}_{2}$ then $G^{\mathcal{C}}=G^{\mathcal{C}_{1}}\cup G^{\mathcal{C}_{2}}$.
\end{proposition}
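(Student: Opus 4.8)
The plan is to verify that the two graphs agree on their vertex sets and on their edge sets separately. For vertices this is immediate: by Definition~\ref{assograph} together with the definition of $\mathcal{C}_{1}\cup\mathcal{C}_{2}$, we have $V(G^{\mathcal{C}})=V(\mathcal{C})=V(\mathcal{C}_{1})\cup V(\mathcal{C}_{2})=V(G^{\mathcal{C}_{1}})\cup V(G^{\mathcal{C}_{2}})$, which is exactly the vertex set of $G^{\mathcal{C}_{1}}\cup G^{\mathcal{C}_{2}}$.

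For the edges I would first rewrite the target edge set in a uniform way. By Definition~\ref{assograph},
\[
E(G^{\mathcal{C}_{1}})\cup E(G^{\mathcal{C}_{2}})=\{\{i,j\}\mid \exists\, e\in E(\mathcal{C}_{1})\cup E(\mathcal{C}_{2}),\ \{i,j\}\subseteq e\},
\]
so the claim reduces to the statement that a pair $\{i,j\}$ is contained in some edge of $E(\mathcal{C})$ if and only if it is contained in some edge of $E(\mathcal{C}_{1})\cup E(\mathcal{C}_{2})$. This is precisely where the definition of $E(\mathcal{C})$ as the set of maximal elements of $E(\mathcal{C}_{1})\cup E(\mathcal{C}_{2})$ must be used.

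The forward inclusion $E(G^{\mathcal{C}})\subseteq E(G^{\mathcal{C}_{1}})\cup E(G^{\mathcal{C}_{2}})$ is automatic, since every maximal element of $E(\mathcal{C}_{1})\cup E(\mathcal{C}_{2})$ is in particular a member of $E(\mathcal{C}_{1})\cup E(\mathcal{C}_{2})$; thus any $e\in E(\mathcal{C})$ with $\{i,j\}\subseteq e$ already witnesses $\{i,j\}\in E(G^{\mathcal{C}_{1}})\cup E(G^{\mathcal{C}_{2}})$. The reverse inclusion is the crux, and here finiteness enters: given $e'\in E(\mathcal{C}_{1})\cup E(\mathcal{C}_{2})$ with $\{i,j\}\subseteq e'$, I would choose a maximal element $e$ of $E(\mathcal{C}_{1})\cup E(\mathcal{C}_{2})$ containing $e'$, which exists because $V(\mathcal{C})$ is finite and hence the collection of its subsets ordered by inclusion has maximal elements above any given element. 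Then $e\in E(\mathcal{C})$ and $\{i,j\}\subseteq e'\subseteq e$, so $\{i,j\}\in E(G^{\mathcal{C}})$.

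The only point to watch — the main obstacle — is ensuring that the passage to maximal elements in the definition of $E(\mathcal{C})$ never discards a covered pair, so that no edge of $G^{\mathcal{C}_{1}}$ or $G^{\mathcal{C}_{2}}$ is lost; the finiteness argument above handles exactly this. Combining the two inclusions gives $E(G^{\mathcal{C}})=E(G^{\mathcal{C}_{1}})\cup E(G^{\mathcal{C}_{2}})$, and together with the vertex identification this yields $G^{\mathcal{C}}=G^{\mathcal{C}_{1}}\cup G^{\mathcal{C}_{2}}$.
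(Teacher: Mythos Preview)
Your argument is correct and follows the same line as the paper, which simply records that the claim is ``clear from the definition of the associated graph of a clutter.'' You have merely unfolded this in detail, checking vertex sets and edge sets separately and handling the passage to maximal elements explicitly; this is precisely what the paper leaves to the reader.
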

\begin{proof}
Clear from the definition of the associated graph of a clutter.
\end{proof}

\begin{definition}\label{simlicialcomp}{\rm  
A \textit{simplicial complex} on a vertex set 
$V=\{ x_{1},\ldots,x_{n}\}$ is a collection of 
subsets of $V$, such that the following properties hold:

\begin{enumerate}
\item[(i)] $\{ x_{i}\}\in \Delta$ for all $x_{i}\in\Delta$;
\item[(ii)] $F\in\Delta$ and $G\subseteq F$ imply $G\in\Delta$.
\end{enumerate}

\noindent An element $F\in \Delta$ is called a \textit{face} of $\Delta$. A maximal 
face of $\Delta$ with respect to inclusion is called a \textit{facet} of 
$\Delta$. A vertex $i$ of $\Delta$ is called a \textit{free vertex} of $\Delta$ if 
$i$ belongs to exactly one facet.
}
\end{definition}

\begin{definition}\label{cliquecomplex}{\rm  
The \textit{clique complex} $\Delta(\mathcal{C})$ of a clutter $\mathcal{C}$ 
is the simplicial complex whose faces are the cliques of $\mathcal{C}$. 
Hence, a vertex $v$ of a clutter $\mathcal{C}$ is called \textit{free vertex} 
if it belongs to only one maximal clique of $\Delta(\mathcal{C})$.
}
\end{definition}

We borrow a few notations and results from \cite{raufrin}, which would be 
required for our purpose. Let $T\subseteq [n]$, and 
$\overline{T}=[n]\setminus T$. We define the induced clutter on 
$\overline{T}$, denoted by $\mathcal{C}_{\overline{T}}$, such that 
$V(\mathcal{C}_{\overline{T}})=\overline{T}$ and $E(\mathcal{C}_{\overline{T}})$ 
is the set containing all the maximal elements of 
$\{ e\setminus T \mid e\in E(\mathcal{C})\}$.

\begin{proposition}\label{prop4}
Let $\mathcal{C}$ be a clutter and $T\subseteq V(\mathcal{C})$. Then $G^{\mathcal{C}_{T}}=G^{\mathcal{C}}_{T}$.
\end{proposition}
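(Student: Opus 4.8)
The plan is to unravel both sides into explicit descriptions of their vertex and edge sets and then check equality directly. By definition, $G^{\mathcal{C}_T}$ is the associated graph of the induced clutter $\mathcal{C}_T$, while $G^{\mathcal{C}}_T$ is the induced subgraph of $G^{\mathcal{C}}$ on the vertex subset $T$. Both carry $T$ as their vertex set: the induced clutter $\mathcal{C}_T$ has $V(\mathcal{C}_T)=T$ by construction, and $G^{\mathcal{C}_T}$ inherits this vertex set from Definition \ref{assograph}, while the induced subgraph $G^{\mathcal{C}}_T$ has vertex set $T$ by definition. Hence I would first dispose of the vertex sets trivially and reduce the problem to showing that the two edge sets coincide, namely
\[
\{\{i,j\}\mid \exists\, e'\in E(\mathcal{C}_T);\ \{i,j\}\subseteq e'\}
\ =\
\{\{i,j\}\mid i,j\in T,\ \exists\, e\in E(\mathcal{C});\ \{i,j\}\subseteq e\}.
\]

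First I would prove the inclusion $E(G^{\mathcal{C}_T})\subseteq E(G^{\mathcal{C}}_T)$. Take any edge $\{i,j\}$ of $G^{\mathcal{C}_T}$, so there is an edge $e'\in E(\mathcal{C}_T)$ with $\{i,j\}\subseteq e'$. Since $E(\mathcal{C}_T)$ consists of the maximal elements of $\{e\cap T\mid e\in E(\mathcal{C})\}$, we have $e'=e\cap T$ for some $e\in E(\mathcal{C})$. Then $\{i,j\}\subseteq e'\subseteq e$ with $i,j\in T$, so $\{i,j\}\in E(G^{\mathcal{C}})$ and both endpoints lie in $T$; that is, $\{i,j\}\in E(G^{\mathcal{C}}_T)$.

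Next I would prove the reverse inclusion $E(G^{\mathcal{C}}_T)\subseteq E(G^{\mathcal{C}_T})$. Take an edge $\{i,j\}$ of $G^{\mathcal{C}}_T$, so $i,j\in T$ and there is an edge $e\in E(\mathcal{C})$ with $\{i,j\}\subseteq e$. Since $i,j\in T$, we get $\{i,j\}\subseteq e\cap T$, and $e\cap T$ lies in the finite family $\{e''\cap T\mid e''\in E(\mathcal{C})\}$. Because this family is finite, $e\cap T$ is contained in some maximal element $m$ of the family, and such an $m$ is precisely an edge of $\mathcal{C}_T$. Thus $\{i,j\}\subseteq e\cap T\subseteq m\in E(\mathcal{C}_T)$, which shows $\{i,j\}\in E(G^{\mathcal{C}_T})$.

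The argument is almost entirely bookkeeping, so there is no serious obstacle; the only point requiring care is the maximality clause in the definition of $E(\mathcal{C}_T)$. Concretely, in the reverse inclusion one cannot assume $e\cap T$ itself belongs to $E(\mathcal{C}_T)$, since only the maximal traces are retained as edges. The resolution is the elementary observation that in a finite collection every element sits below some maximal element; passing to such a maximal trace $m$ preserves the pair $\{i,j\}$, which is all that is needed. Combining the two inclusions with the equality of vertex sets yields $G^{\mathcal{C}_T}=G^{\mathcal{C}}_T$.
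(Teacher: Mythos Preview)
Your proof is correct and follows essentially the same approach as the paper: verify equality of vertex sets, then establish both inclusions of edge sets by tracing an edge $\{i,j\}$ through the definitions. In fact you are slightly more careful than the paper in the reverse inclusion, since you explicitly pass from $e\cap T$ to a maximal element of $\{e''\cap T\mid e''\in E(\mathcal{C})\}$, whereas the paper simply asserts $e\setminus\overline{T}\in E(\mathcal{C}_T)$ without addressing maximality.
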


\begin{proof}
We have $V(G^{\mathcal{C}_{T}})=V(\mathcal{C}_{T})=T$ and 
$V(G^{\mathcal{C}}_{T})=T$. Therefore  
$V(G^{\mathcal{C}_{T}})=V(G^{\mathcal{C}}_{T})$. 
To show that $E(G^{\mathcal{C}_{T}})=E(G^{\mathcal{C}}_{T})$, 
let $\{ i,j\}\in E(G^{\mathcal{C}_{T}})$, then there 
exists $e\in E(\mathcal{C}_{T})$, such that $i,j\in e$. Therefore, 
there exists $e^{\prime}\in E(\mathcal{C})$ such that 
$e=e^{\prime}\setminus \overline{T}$. This implies that 
$i,j\in e^{\prime}$ and so $\{ i,j\}\in E(G^{\mathcal{C}})$. 
Since $\{ i,j\}\subset T$, we have 
$\{ i,j\}\in E(G^{\mathcal{C}}_{T})$. 
Thus $E(G^{\mathcal{C}_{T}})\subseteq E(G^{\mathcal{C}}_{T})$. 
Now, let $\{ i,j\}\in E(G^{\mathcal{C}}_{T})$. 
Then $\{ i,j\}\in E(G^{\mathcal{C}})$, i.e., 
there exists $e\in E(\mathcal{C})$ containing $i,j$. We get 
$e^{\prime}=e\setminus \overline{T}\in E(\mathcal{C}_{T})$ 
and $i,j\in e^{\prime}$ as $i,j\in T$. This gives 
$\{ i,j\}\in E(G^{\mathcal{C}_{T}})$. Hence, 
$E(G^{\mathcal{C}_{T}})=E(G^{\mathcal{C}}_{T})$.
\end{proof}

We write $P_{T}(\mathcal{C})=P_{T}(G^{\mathcal{C}})$. Then from 
\cite{raufrin} (section 1) we have 
$$J_{\mathcal{C}}=J_{G_{\mathcal{C}}}=\cap_{T\subset [n]}P_{T}(G^{\mathcal{C}})=\cap_{T\subset [n]}P_{T}(\mathcal{C}).$$
If $T$ has cut point property for $G^{\mathcal{C}}$, then we say $T$ has cut point property for $\mathcal{C}$. We denote by $\mathscr{M}(\mathcal{C})$ the set of minimal prime ideals of $J_{\mathcal{C}}$ and by $\mathscr{C}(\mathcal{C})$ the set of all $T\subset V(\mathcal{C})$ such that $T$ has cut point property for $\mathcal{C}$.

\begin{lemma}\label{lem1}
$P_{T}(\mathcal{C})\in \mathscr{M}(\mathcal{C})$ if and only if $T\in\mathscr{C}(\mathcal{C})$.
\end{lemma}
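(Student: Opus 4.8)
The plan is to reduce the statement entirely to the already-known graph-theoretic characterization of the minimal primes of a binomial edge ideal, exploiting the fact that every object appearing in the lemma has been defined for $\mathcal{C}$ so as to coincide with the corresponding object for the associated graph $G^{\mathcal{C}}$. Recall from Rauf and Rinaldo \cite{raufrin} (originally due to the authors of \cite{hhhrkara}) that for a graph $G$ on $[n]$ one has the decomposition $J_G = \bigcap_{T \subseteq [n]} P_T(G)$, and that $P_T(G)$ is a minimal prime of $J_G$ precisely when $T$ has the cut point property for $G$; that is, $\mathscr{M}(G) = \{ P_T(G) : T \in \mathscr{C}(G)\}$.

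First I would apply this graph result with $G = G^{\mathcal{C}}$, obtaining that $P_T(G^{\mathcal{C}})$ is a minimal prime of $J_{G^{\mathcal{C}}}$ if and only if $T \in \mathscr{C}(G^{\mathcal{C}})$. The remaining task is a translation through the definitions. By Remark \ref{re1} we have $J_{\mathcal{C}} = J_{G^{\mathcal{C}}}$, so the two ideals have identical sets of minimal primes and $\mathscr{M}(\mathcal{C}) = \mathscr{M}(G^{\mathcal{C}})$. By the notational convention introduced just before the lemma, $P_T(\mathcal{C}) = P_T(G^{\mathcal{C}})$, and likewise $T \in \mathscr{C}(\mathcal{C})$ means exactly $T \in \mathscr{C}(G^{\mathcal{C}})$ by the definition of the cut point property for clutters. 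Chaining these equivalences,
\[
P_T(\mathcal{C}) \in \mathscr{M}(\mathcal{C}) \iff P_T(G^{\mathcal{C}}) \in \mathscr{M}(G^{\mathcal{C}}) \iff T \in \mathscr{C}(G^{\mathcal{C}}) \iff T \in \mathscr{C}(\mathcal{C}),
\]
which is the claim.

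The only point that requires genuine verification, rather than pure bookkeeping, is that the intersection decomposition $J_{\mathcal{C}} = \bigcap_T P_T(\mathcal{C})$ indexed by subsets of $V(\mathcal{C})$ really is inherited from the graph one in a compatible way; in particular, that the induced-clutter components used to build $P_T(\mathcal{C})$ agree with the induced-subgraph components used to build $P_T(G^{\mathcal{C}})$. This compatibility is supplied by Proposition \ref{prop4}, which gives $G^{\mathcal{C}_T} = G^{\mathcal{C}}_T$, together with Proposition \ref{prop1} ensuring that connectedness, hence the number of connected components, and hence the cut point property via Proposition \ref{prop2}, is computed identically for $\mathcal{C}$ and for $G^{\mathcal{C}}$. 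I expect no serious obstacle here: once these compatibilities are noted, the lemma becomes a formal consequence of the graph case, and the only care needed is to cite the correct graph statement and to confirm that each defined quantity for $\mathcal{C}$ unwinds to its $G^{\mathcal{C}}$ counterpart.
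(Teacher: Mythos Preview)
Your proposal is correct and follows essentially the same approach as the paper's proof: the paper simply records that $P_{T}(\mathcal{C})=P_{T}(G^{\mathcal{C}})$, $\mathscr{M}(\mathcal{C})=\mathscr{M}(G^{\mathcal{C}})$ and $\mathscr{C}(\mathcal{C})=\mathscr{C}(G^{\mathcal{C}})$, and then invokes Corollary~3.9 of \cite{hhhrkara}. Your version is somewhat more explicit about the supporting compatibilities (Propositions~\ref{prop1}, \ref{prop2}, \ref{prop4} and Remark~\ref{re1}), but the argument is the same reduction to the graph case.
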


\begin{proof}
Since $P_{T}(\mathcal{C})=P_{T}(G^{\mathcal{C}}),\, \mathscr{M}(\mathcal{C})=\mathscr{M}(G^{\mathcal{C}})$ and $\mathscr{C}(\mathcal{C})=\mathscr{C}(G^{\mathcal{C}})$, the proof follows from Corollary 3.9 (\cite{hhhrkara}).
\end{proof}

\section{Gluing of Clutters and Binomial Edge Ideals}
The unmixed property and the Cohen-Macaulay property of an 
edge ideal of a graph, constructed by gluing of two graphs, 
was studied in \cite{raufrin}. In this section, we study 
the same two properties of a binomial edge ideal of a clutter, 
constructed by gluing of two clutters with respect to a free 
vertex belonging to both the clutters. It is known that a 
binomial edge ideal $J_{\mathcal{C}}$ is Cohen-Macaulay 
(resp. unmixed) if and only if $J_{H}$ is Cohen-Macaulay 
(resp. unmixed), for each connected component $H$ of 
$\mathcal{C}$; this allows us to assume that the clutter 
$\mathcal{C}$ is connected. 

\begin{proposition}\label{glu1}
Let $\mathcal{C}$ be a clutter. Then $\Delta(\mathcal{C})=\Delta(G^{\mathcal{C}})$.
\end{proposition}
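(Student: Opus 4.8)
The plan is to prove the equality of the two simplicial complexes by showing that they have the same faces, which reduces to showing that a subset $D \subseteq V(\mathcal{C})$ is a clique of the clutter $\mathcal{C}$ if and only if it is a clique of the associated graph $G^{\mathcal{C}}$. Since both complexes live on the common vertex set $V(\mathcal{C}) = V(G^{\mathcal{C}})$ (immediate from Definition~\ref{assograph}), and since a simplicial complex is completely determined by the set of its faces, establishing this equivalence for all subsets $D$ suffices to conclude the proposition.

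First I would unwind the definition of a clique of the clutter (Definition~\ref{cliq}): the set $D$ is a clique of $\mathcal{C}$ exactly when, for every pair $i, j \in D$ with $i \neq j$, there exists an edge $e \in E(\mathcal{C})$ with $\{i,j\} \subseteq e$. Next I would compare this against the defining edge set of $G^{\mathcal{C}}$ (Definition~\ref{assograph}): by construction, $\{i,j\} \in E(G^{\mathcal{C}})$ holds precisely when some $e \in E(\mathcal{C})$ contains both $i$ and $j$. Thus the adjacency condition ``there is an edge of $\mathcal{C}$ containing $i$ and $j$'' is literally the same as the condition ``$\{i,j\}$ is an edge of $G^{\mathcal{C}}$''.

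With this identification, the clutter-clique condition becomes: for every $i \neq j$ in $D$, the pair $\{i,j\}$ is an edge of $G^{\mathcal{C}}$, which is exactly the defining property of $D$ being a clique of the graph $G^{\mathcal{C}}$ in the usual graph-theoretic sense. Hence the cliques of $\mathcal{C}$ and the cliques of $G^{\mathcal{C}}$ coincide as families of subsets of the common vertex set, and therefore $\Delta(\mathcal{C}) = \Delta(G^{\mathcal{C}})$.

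This argument is a direct chase through the definitions, so I do not anticipate a genuine obstacle. The only point requiring minor care is the bookkeeping of degenerate faces --- singletons and the empty set, which are cliques in both settings by convention (Definition~\ref{cliq} already records that singletons are cliques) --- but these are handled vacuously by the same pairwise-adjacency criterion and present no real difficulty.
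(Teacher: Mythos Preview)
Your proposal is correct and follows essentially the same approach as the paper: both arguments unwind Definitions~\ref{cliq} and~\ref{assograph} to show that the pairwise-adjacency condition defining a clique of $\mathcal{C}$ is identical to the condition defining a clique of $G^{\mathcal{C}}$. The paper presents this as two separate inclusions $\Delta(\mathcal{C})\subseteq\Delta(G^{\mathcal{C}})$ and $\Delta(G^{\mathcal{C}})\subseteq\Delta(\mathcal{C})$, while you collapse them into a single equivalence, but the content is the same.
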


\begin{proof}
Let $F\in \Delta(\mathcal{C})$ be a face. Let $i,j\in F$ and $i\neq j$. There 
exists $e\in E(\mathcal{C})$, such that $i,j\in e$. Then 
$\{ i,j\} \in E(G^{\mathcal{C}})$. Therefore, for all 
$i,j\in F$ with $i\neq j$, we have $\{ i,j\}\in E(G^{\mathcal{C}})$, which implies that $F$ is a clique of $G^{\mathcal{C}}$. It follows 
that $F\in \Delta(G^{\mathcal{C}})$ and so $\Delta(\mathcal{C})\subseteq\Delta(G^{\mathcal{C}})$.
\medskip

Now let $F\in \Delta(G^{\mathcal{C}})$. Then $F$ is a clique of 
$G^{\mathcal{C}}$, i.e., for all $i,j\in F$ with $i\neq j$, 
$\{ i,j\}\in E(G^{\mathcal{C}})$. By the definition 
of $G^{\mathcal{C}}$, there exists $e\in E(\mathcal{C})$ containing 
$i,j$, for all $i,j\in F$ and $i\not=j$. It follows that $F$ is a 
clique of $\mathcal{C}$ and so $F\in\Delta(\mathcal{C})$. Hence 
$\Delta(\mathcal{C})=\Delta(G^{\mathcal{C}})$.
\end{proof}

\begin{corollary}\label{glu2}
$v$ is a free vertex of $\Delta(\mathcal{C})$ if and only if $v$ is a free vertex of $\Delta(G^{\mathcal{C}})$.
\end{corollary}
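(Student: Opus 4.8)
The plan is to deduce Corollary \ref{glu2} directly from Proposition \ref{glu1}, which asserts the equality of simplicial complexes $\Delta(\mathcal{C}) = \Delta(G^{\mathcal{C}})$. Since a free vertex is defined (Definition \ref{simlicialcomp} and Definition \ref{cliquecomplex}) purely in terms of the facet structure of the underlying simplicial complex—namely, $v$ is free if it belongs to exactly one facet—the entire statement should reduce to the observation that two equal simplicial complexes have identical facets, and hence identical free vertices.

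First I would invoke Proposition \ref{glu1} to record that $\Delta(\mathcal{C})$ and $\Delta(G^{\mathcal{C}})$ are the \emph{same} simplicial complex, not merely isomorphic ones; they share the same vertex set $V(\mathcal{C}) = V(G^{\mathcal{C}})$ and the same collection of faces. Next I would note that the notion of a facet—a maximal face with respect to inclusion—depends only on the poset of faces under inclusion. Since the two complexes have literally the same faces, their facets coincide as well. Consequently, for any vertex $v$, the set of facets containing $v$ is the same whether computed in $\Delta(\mathcal{C})$ or in $\Delta(G^{\mathcal{C}})$.

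Finally I would conclude by applying the definition of free vertex to both sides: $v$ is a free vertex of $\Delta(\mathcal{C})$ precisely when $v$ lies in exactly one facet of $\Delta(\mathcal{C})$, which by the preceding equality of facet sets holds if and only if $v$ lies in exactly one facet of $\Delta(G^{\mathcal{C}})$, i.e.\ if and only if $v$ is a free vertex of $\Delta(G^{\mathcal{C}})$. This chain of biconditionals gives the claim.

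I do not anticipate any genuine obstacle here, since all the substantive combinatorial content has already been established in Proposition \ref{glu1}; the corollary is essentially a formal unwinding of the definition of a free vertex applied to two identical complexes. The only point requiring a modicum of care is to emphasize that "free vertex" is an intrinsic property of a simplicial complex (depending solely on its facets), so that equality of complexes immediately transfers the property—rather than, say, trying to re-prove the equivalence from scratch in terms of cliques or edges of the clutter.
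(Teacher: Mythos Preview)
Your proposal is correct and follows essentially the same approach as the paper, which simply records that the corollary ``follows from the definitions.'' You have merely made explicit what the paper leaves implicit: that Proposition~\ref{glu1} gives $\Delta(\mathcal{C})=\Delta(G^{\mathcal{C}})$, and that freeness of a vertex is intrinsic to the complex.
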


\proof Follows from the definitions. \qed

\begin{proposition}\label{glu3}
Let $\mathcal{C}$ be a clutter, $\Delta(\mathcal{C})$ its clique complex and $v\in V(\mathcal{C})$. The following statements are equivalent:
\begin{enumerate}
\item[(a)] There exists $T\in \mathscr{C}(\mathcal{C})$, such that $v\in T$;
\item[(b)] $v$ is not a free vertex of $\Delta(\mathcal{C})$.
\end{enumerate}
\end{proposition}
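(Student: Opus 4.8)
The plan is to transfer both conditions to the associated graph $G=G^{\mathcal{C}}$ and then argue combinatorially there. By Corollary \ref{glu2} together with Proposition \ref{glu1}, $v$ is a free vertex of $\Delta(\mathcal{C})$ if and only if it is a free vertex of $\Delta(G^{\mathcal{C}})$, that is, a simplicial vertex of $G^{\mathcal{C}}$ (a vertex whose neighbourhood induces a clique). Since $\mathscr{C}(\mathcal{C})=\mathscr{C}(G^{\mathcal{C}})$ by the discussion preceding Lemma \ref{lem1}, statement (a) holds for $\mathcal{C}$ exactly when the corresponding statement holds for $G^{\mathcal{C}}$. Thus it suffices to prove the equivalence for the graph $G=G^{\mathcal{C}}$, where I will use the combinatorial description of the cut point property from \cite{hhhrkara} (Corollary 3.9): a nonempty $T$ lies in $\mathscr{C}(G)$ if and only if every $i\in T$ is a cut vertex of the induced subgraph $G_{\overline{T}\cup\{i\}}$.

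For the implication (a) $\Rightarrow$ (b) I would argue by contraposition. Suppose $v$ is a free vertex, so its neighbourhood $N(v)$ is a clique of $G$. If $v\in T$ for some $T\in\mathscr{C}(G)$, then the neighbours of $v$ surviving in $G_{\overline{T}\cup\{v\}}$ form a subset of $N(v)$, hence still a clique, so $v$ is simplicial, and therefore not a cut vertex, in $G_{\overline{T}\cup\{v\}}$. This contradicts the cut point characterization, so no such $T$ can exist.

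For (b) $\Rightarrow$ (a), suppose $v$ is not free and pick $a,b\in N(v)$ with $\{a,b\}\notin E(G)$. Since $a\not\sim b$, the set $V(G)\setminus\{a,b\}$ separates $a$ from $b$, so a minimal vertex separator $S$ of $a$ and $b$ exists. The first key point is that $v\in S$: as $v$ is a common neighbour of $a$ and $b$, if $v$ remained in $G_{V(G)\setminus S}$ it would place $a$ and $b$ in the same component, which is impossible for a separator. The second key point is that such a minimal separator satisfies the cut point property: minimality forces each $s\in S$ to have a neighbour in both the component of $a$ and the component of $b$ of $G_{V(G)\setminus S}$, so adding $s$ back merges these two components, whence $s$ is a cut vertex of $G_{\overline{S}\cup\{s\}}$. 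Therefore $S\in\mathscr{C}(G)$ with $v\in S$, which is what (a) asserts.

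The routine reductions and the direction (a) $\Rightarrow$ (b) are straightforward. I expect the main obstacle to be the construction in (b) $\Rightarrow$ (a): exhibiting a single $T\in\mathscr{C}(G)$ that contains $v$. The containment $v\in S$ is immediate from the common-neighbour observation, so the real work is the standard but slightly delicate fact that every vertex of a \emph{minimal} separator bridges the two sides, since this is exactly what upgrades $S$ from a mere separator into a member of $\mathscr{C}(G)$.
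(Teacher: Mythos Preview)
Your proposal is correct. The reduction to the associated graph $G^{\mathcal{C}}$ via Proposition~\ref{glu1}, Corollary~\ref{glu2}, and the identity $\mathscr{C}(\mathcal{C})=\mathscr{C}(G^{\mathcal{C}})$ is exactly what the paper does; the paper then simply invokes \cite{raufrin} (Proposition~2.1) for the graph statement and stops. You instead unpack that citation and supply a self-contained combinatorial proof: the contrapositive for (a)$\Rightarrow$(b) (a simplicial vertex can never be a cut vertex in any induced subgraph, so it never lies in a set with the cut point property) and, for (b)$\Rightarrow$(a), the construction of $T$ as a minimal $a$--$b$ separator for two nonadjacent neighbours $a,b$ of $v$. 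Both steps are sound; in particular, your key claim that every vertex of a minimal separator is adjacent to both sides---hence is a cut vertex of $G_{\overline{S}\cup\{s\}}$---is the standard and correct argument. So your proof is the paper's proof with the cited black box opened up: more work, but more self-contained.
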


\begin{proof}
Since  $\mathscr{C}(\mathcal{C})=\mathscr{C}(G^{\mathcal{C}})$ and 
$\Delta(\mathcal{C})=\Delta(G^{\mathcal{C}})$, the proof follows from \cite{raufrin} (Proposition 2.1).
\end{proof}

\begin{lemma}\label{glu4}
Let $\mathcal{C}$ be a clutter with $v\in V(\mathcal{C})$, such that 
$v$ is a free vertex in $\Delta(\mathcal{C})$. Let $F$ be the facet 
of $\Delta(\mathcal{C})$, with $v\in F$, and let $T\subset V(\mathcal{C})$ 
with $F\setminus\{ v\}\not\subseteq T$. The following conditions are equivalent:
\begin{enumerate}
\item[(a)] $T\in\mathscr{C}(\mathcal{C})$;
\item[(b)] $v\not\in T$ and $T\in \mathscr{C}(\mathcal{C}/v)$.
\end{enumerate}
\end{lemma}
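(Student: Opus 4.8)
The plan is to push the whole statement down to the associated graph $G^{\mathcal{C}}$ and then quote the corresponding result for graphs from \cite{raufrin}, since every object appearing in the lemma has already been matched with its graph counterpart. Concretely, Proposition \ref{glu1} gives $\Delta(\mathcal{C})=\Delta(G^{\mathcal{C}})$, so by Corollary \ref{glu2} the vertex $v$ is free in $\Delta(\mathcal{C})$ exactly when it is free in $\Delta(G^{\mathcal{C}})$, and the facet $F$ containing $v$ is literally the same set in both complexes; the set $\mathscr{C}(\mathcal{C})$ equals $\mathscr{C}(G^{\mathcal{C}})$ by definition; and Definition \ref{newclutter} yields $G^{\mathcal{C}/v}=G^{\mathcal{C}}\setminus\{v\}$, so that $\mathscr{C}(\mathcal{C}/v)=\mathscr{C}(G^{\mathcal{C}/v})=\mathscr{C}(G^{\mathcal{C}}\setminus\{v\})$.

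First I would record these identifications so that the hypotheses and both conclusions are rewritten purely in terms of $G^{\mathcal{C}}$: the hypothesis becomes ``$v$ is free in $\Delta(G^{\mathcal{C}})$, $F$ is the facet with $v\in F$, and $F\setminus\{v\}\not\subseteq T$''; condition (a) becomes $T\in\mathscr{C}(G^{\mathcal{C}})$; and condition (b) becomes ``$v\notin T$ and $T\in\mathscr{C}(G^{\mathcal{C}}\setminus\{v\})$''. After these substitutions the assertion is exactly the corresponding lemma for graphs in \cite{raufrin}, and the proof finishes by citing it.

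For a self-contained argument I would instead unwind the cut point property. Writing $c(S)$ for the number of connected components of the induced subgraph $G^{\mathcal{C}}_{\overline{S}}$, one has $T\in\mathscr{C}(G^{\mathcal{C}})$ iff $c(T\setminus\{i\})<c(T)$ for every $i\in T$ (Corollary 3.9 of \cite{hhhrkara}). The structural fact driving everything is that a free vertex $v$ is simplicial: its neighbourhood equals $F\setminus\{v\}$, which is a clique, so $v$ is never a cut point of any induced subgraph. This immediately forces $v\notin T$ in (a), because putting $v$ back into $G^{\mathcal{C}}_{\overline{T}}$ can only leave it isolated or attach it to the single component carrying its neighbours, giving $c(T\setminus\{v\})\geq c(T)$. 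Granting $v\notin T$, the hypothesis $F\setminus\{v\}\not\subseteq T$ supplies a neighbour $w$ of $v$ with $w\in\overline{T}$, so $w$ survives in every induced subgraph $G^{\mathcal{C}}_{\overline{T}\cup\{i\}}$ occurring in the criterion; hence $v$ is non-isolated there and, being simplicial, non-cut, so deleting $v$ changes none of the relevant component counts. The inequalities $c(T\setminus\{i\})<c(T)$ therefore hold in $G^{\mathcal{C}}$ precisely when they hold in $G^{\mathcal{C}}\setminus\{v\}=G^{\mathcal{C}/v}$, which is $(a)\Leftrightarrow(b)$.

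The part I expect to be the real obstacle is locating where each hypothesis is used. Simpliciality of $v$ alone forces $v\notin T$, while the more technical condition $F\setminus\{v\}\not\subseteq T$ is precisely what rules out the bad scenario in which $v$ is isolated in $G^{\mathcal{C}}_{\overline{T}}$ yet non-isolated in some $G^{\mathcal{C}}_{\overline{T}\cup\{i\}}$; such a discrepancy would shift component counts unevenly and break the transfer between $G^{\mathcal{C}}$ and $G^{\mathcal{C}}\setminus\{v\}$. In the reduction route this obstacle is absorbed into the cited graph lemma of \cite{raufrin}, which is why I would present the reduction as the primary proof and regard the combinatorial analysis above as the justification of that underlying graph statement.
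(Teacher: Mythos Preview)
Your primary proof is essentially identical to the paper's: both transfer all objects to $G^{\mathcal{C}}$ via $\Delta(\mathcal{C})=\Delta(G^{\mathcal{C}})$, $\mathscr{C}(\mathcal{C})=\mathscr{C}(G^{\mathcal{C}})$, and $G^{\mathcal{C}/v}=G^{\mathcal{C}}\setminus\{v\}$, and then invoke \cite{raufrin} (Lemma 2.2). Your additional self-contained combinatorial argument unwinding the cut point property is correct and goes beyond what the paper provides, which simply cites \cite{raufrin} for both implications.
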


\begin{proof}
We know that $\Delta(\mathcal{C})=\Delta(G^{\mathcal{C}})$. Therefore 
$v$ is a free vertex of $\Delta(\mathcal{C})$ if and only if $v$ 
is a free vertex of $\Delta(G^{\mathcal{C}})$. 
\medskip

\noindent\textsf{(a) $\Longrightarrow$ (b):} By an application 
of \cite{raufrin} (Lemma 2.2), we get 
$T\in \mathscr{C}(\mathcal{C})=\mathscr{C}(G^{\mathcal{C}})$, 
which implies $v\not\in T$ and 
$T\in \mathscr{C}(G^{\mathcal{C}}\setminus \{ v\})$. 
Now the result follows from the fact that 
$G^{\mathcal{C}/v}=G^{\mathcal{C}}\setminus \{ v\}$.
\medskip

\noindent\textsf{(b) $\Longrightarrow$ (a):} $v\not\in T$ and $T\in \mathscr{C}(\mathcal{C}/v)=\mathscr{C}(G^{\mathcal{C}/v})=\mathscr{C} (G^{\mathcal{C}}\setminus \{ v\})$. Then $T\in\mathscr{C}(G^{\mathcal{C}})=\mathscr{C}(\mathcal{C})$, by \cite{raufrin} (Lemma 2.2).
\end{proof} 

\begin{definition}\label{gluing}{\rm 
Let $\mathcal{C}=\mathcal{C}_{1}\cup \mathcal{C}_{2}$ be a clutter such that $V(\mathcal{C}_{1})\cap V(\mathcal{C}_{2})=\{ v\}$ and v is a free 
vertex of $\Delta(\mathcal{C}_{1})$ and $\Delta(\mathcal{C}_{2})$. We 
say that $\mathcal{C}$ is a gluing of $\mathcal{C}_{1}$ and $\mathcal{C}_{2}$.
}
\end{definition}

\begin{lemma}\label{glu5}
Let $\mathcal{C}$ be a gluing of the clutters $\mathcal{C}_{1}$ and 
$\mathcal{C}_{2}$ at the vertex $v$. Let $v\in F_{1}\in\Delta(\mathcal{C}_{1})$ 
and $v\in F_{2}\in\Delta(\mathcal{C}_{2})$, where $F_{1},\, F_{2}$ are facets. 
Then 
$$ \mathscr{C}(\mathcal{C})=\mathscr{A}\cup\mathscr{B},$$ where 
$$\mathscr{A}=\{ T\subset V(\mathcal{C}) \mid T=T_{1}\cup T_{2},\,T_{i}\in\mathscr{C}(\mathcal{C}_{i})\,\,for\,\, i=1,2\},$$ and $$\mathscr{B}=\{ T\subset V(\mathcal{C}) \mid T=T_{1}\cup T_{2}\cup \{ v\},\, T_{i}\in\mathscr{C}(\mathcal{C}_{i}), \, F_{i}\not\subseteq T_{i}\cup\{ v\},\, i=1,2\}.$$
\end{lemma}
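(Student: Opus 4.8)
The plan is to transfer the entire statement to the associated graph $G^{\mathcal{C}}$ and then invoke the corresponding description of cut-point sets for a gluing of graphs proved in \cite{raufrin}, exactly as the rest of this section operates. First I would record the structural reductions already available. By Proposition \ref{prop3}, $\mathcal{C}=\mathcal{C}_{1}\cup\mathcal{C}_{2}$ gives $G^{\mathcal{C}}=G^{\mathcal{C}_{1}}\cup G^{\mathcal{C}_{2}}$, and since $V(\mathcal{C}_{1})\cap V(\mathcal{C}_{2})=\{v\}$ the two graphs meet only in $v$. By Proposition \ref{glu1} we have $\Delta(\mathcal{C}_{i})=\Delta(G^{\mathcal{C}_{i}})$, so by Corollary \ref{glu2} the vertex $v$ is a free vertex of $\Delta(G^{\mathcal{C}_{i}})$ and $F_{1},F_{2}$ are facets of $\Delta(G^{\mathcal{C}_{1}}),\Delta(G^{\mathcal{C}_{2}})$ containing $v$. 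Hence $G^{\mathcal{C}}$ is precisely the gluing of the graphs $G^{\mathcal{C}_{1}}$ and $G^{\mathcal{C}_{2}}$ at the free vertex $v$ in the sense of \cite{raufrin}.

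Next I would use the identifications $\mathscr{C}(\mathcal{C})=\mathscr{C}(G^{\mathcal{C}})$ and $\mathscr{C}(\mathcal{C}_{i})=\mathscr{C}(G^{\mathcal{C}_{i}})$, which come from $P_{T}(\mathcal{C})=P_{T}(G^{\mathcal{C}})$ (as used in the proof of Lemma \ref{lem1}). Under these identifications the sets $\mathscr{A}$ and $\mathscr{B}$ defined in the statement coincide verbatim with the analogous sets built from $\mathscr{C}(G^{\mathcal{C}_{i}})$ and the facets $F_{i}$. Therefore the desired equality $\mathscr{C}(\mathcal{C})=\mathscr{A}\cup\mathscr{B}$ is literally the graph statement $\mathscr{C}(G^{\mathcal{C}})=\mathscr{A}\cup\mathscr{B}$, i.e. the gluing lemma for graphs in \cite{raufrin}. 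This route matches the dictionary $\Delta(\mathcal{C})=\Delta(G^{\mathcal{C}})$, $\mathscr{C}(\mathcal{C})=\mathscr{C}(G^{\mathcal{C}})$ through which every clutter assertion in this section is deduced from its graph counterpart.

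If instead a self-contained combinatorial argument is wanted, I would prove both inclusions by splitting on whether $v\in T$, using Lemma \ref{glu4} and Proposition \ref{glu3} as the local tools. Since $v$ is free in $\Delta(\mathcal{C}_{i})$, Proposition \ref{glu3} forces $v\notin T_{i}$ for every $T_{i}\in\mathscr{C}(\mathcal{C}_{i})$; thus $\mathscr{A}$ consists of the cut-point sets avoiding $v$ and $\mathscr{B}$ of those containing $v$. For $v\notin T$ one sets $T_{i}=T\cap V(\mathcal{C}_{i})$ and argues, applying Lemma \ref{glu4} on each side, that $T$ has the cut-point property for $\mathcal{C}$ if and only if each $T_{i}$ has it for $\mathcal{C}_{i}$; here the surviving vertex $v$ keeps the two sides joined, so the cut-vertex condition on a vertex of $T_{i}$ is detected entirely inside $G^{\mathcal{C}_{i}}$. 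For $v\in T$ one sets $T_{i}=(T\cap V(\mathcal{C}_{i}))\setminus\{v\}$ and must additionally produce the facet conditions $F_{i}\not\subseteq T_{i}\cup\{v\}$.

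The main obstacle is precisely this last $v\in T$ case. The free-vertex hypothesis is what makes it tractable: $v$ belongs to a unique facet $F_{i}$ of $\Delta(\mathcal{C}_{i})$, so its neighborhood in $G^{\mathcal{C}_{i}}$ equals $F_{i}\setminus\{v\}$. Consequently $v$ survives as a cut vertex of the relevant induced subgraph of $G^{\mathcal{C}}$ if and only if it still has a neighbor on each side, that is, $F_{i}\setminus\{v\}\not\subseteq T$ for $i=1,2$, which is exactly $F_{i}\not\subseteq T_{i}\cup\{v\}$. Verifying that these two facet conditions are simultaneously necessary and sufficient for $v$ to contribute a cut point, while the remaining vertices of $T$ continue to satisfy the cut-point property on their own side, is the delicate bookkeeping; everything else is a routine translation through the associated-graph dictionary.
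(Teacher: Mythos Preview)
Your proposal is correct and follows exactly the paper's approach: the paper's proof simply notes that $G^{\mathcal{C}}$ is a gluing of $G^{\mathcal{C}_{1}}$ and $G^{\mathcal{C}_{2}}$ at $v$, invokes the identifications $\Delta(\mathcal{C}_{i})=\Delta(G^{\mathcal{C}_{i}})$, $\mathscr{C}(\mathcal{C})=\mathscr{C}(G^{\mathcal{C}})$, $\mathscr{C}(\mathcal{C}_{i})=\mathscr{C}(G^{\mathcal{C}_{i}})$, and cites \cite{raufrin} (Lemma 2.3). Your alternative self-contained combinatorial argument in the last two paragraphs is not needed and does not appear in the paper.
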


\begin{proof}
$G^{\mathcal{C}}$ is a gluing of $G^{\mathcal{C}_{1}}$ and $G^{\mathcal{C}_{2}}$ at $v$. Since $\Delta(\mathcal{C}_{i})=\Delta(G^{\mathcal{C}_{i}})$, $\mathscr{C}(\mathcal{C})=\mathscr{C}(G^{\mathcal{C}})$ and $\mathscr{C}(\mathcal{C}_{i})=\mathscr{C}(G^{\mathcal{C}_{i}})$, for $i=1,2$, the proof follows from \cite{raufrin} (Lemma 2.3).
\end{proof}

\begin{corollary}\label{glu6}
Let $\mathcal{C}$ be a gluing of the clutters $\mathcal{C}_{1}$ and 
$\mathcal{C}_{2}$ at the vertex $v$. Then 
$\mathrm{height}\,P_{T}(\mathcal{C})=\mathrm{height}\,P_{T_{1}}(\mathcal{C}_{1})+\mathrm{height}\,P_{T_{2}}(\mathcal{C}_{2})$, 
for all $T\in \mathscr{C}(\mathcal{C})$, $T_{i}\in \mathscr{C}(\mathcal{C}_{i})$, 
for $i=1,2$, defined as in Lemma 3.5.
\end{corollary}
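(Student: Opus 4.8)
The plan is to reduce the height computation to a purely combinatorial count of connected components by way of the known height formula for the primes $P_T$. Recall from \cite{hhhrkara} that for a graph $G$ on $[n]$ and $T\subseteq[n]$ one has $\mathrm{height}\,P_T(G)=n+|T|-c_G(T)$, where $c_G(T)$ denotes the number of connected components of the induced subgraph $G_{[n]\setminus T}$ (this follows since $P_T$ is, on disjoint variable sets, $2|T|$ linear forms together with the binomial edge ideals of the completions of the components of $G_{[n]\setminus T}$, each of which contributes codimension one less than its vertex count). Since $P_T(\mathcal{C})=P_T(G^{\mathcal{C}})$ and, by Proposition \ref{prop3} together with Definition \ref{gluing}, $G^{\mathcal{C}}=G^{\mathcal{C}_1}\cup G^{\mathcal{C}_2}$ is a gluing of graphs at $v$, it suffices to track how $c_{G^{\mathcal{C}}}(T)$ decomposes. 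Writing $n_i=|V(\mathcal{C}_i)|$, we have $n=n_1+n_2-1$ because the vertex sets meet only in $\{v\}$; and since $v$ is free in each $\Delta(\mathcal{C}_i)$, Proposition \ref{glu3} forces $v\notin T_i$, so $T_1\cap T_2=\emptyset$.

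I would then split according to Lemma \ref{glu5}. If $T\in\mathscr{A}$, then $v\notin T$ and $T=T_1\cup T_2$, so $|T|=|T_1|+|T_2|$. Restricting to the two sides, $(G^{\mathcal{C}})_{[n]\setminus T}$ is the gluing at $v$ of $(G^{\mathcal{C}_1})_{V(\mathcal{C}_1)\setminus T_1}$ and $(G^{\mathcal{C}_2})_{V(\mathcal{C}_2)\setminus T_2}$, and $v$ survives in both. Gluing at $v$ identifies the unique component of each side containing $v$, merging exactly two components into one, so $c_{G^{\mathcal{C}}}(T)=c_{G^{\mathcal{C}_1}}(T_1)+c_{G^{\mathcal{C}_2}}(T_2)-1$. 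Substituting into the height formula, the surplus $-1$ in the component count cancels the $-1$ in $n=n_1+n_2-1$, and the additivity drops out.

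If $T\in\mathscr{B}$, then $v\in T$ and $T=T_1\cup T_2\cup\{v\}$, so $|T|=|T_1|+|T_2|+1$; now $v$ is deleted, and as the two sides share only $v$ there are no edges between them in $(G^{\mathcal{C}})_{[n]\setminus T}$, which is therefore the disjoint union of the two restricted graphs with $v$ removed, and their counts simply add. The step I expect to be the main obstacle is comparing $c_{G^{\mathcal{C}_i}}(T_i)$, computed with $v$ present, to the count with $v$ removed. Here I would invoke that $v$ is free in $\Delta(\mathcal{C}_i)=\Delta(G^{\mathcal{C}_i})$ (Proposition \ref{glu1}): its neighbourhood in $G^{\mathcal{C}_i}$ is exactly $F_i\setminus\{v\}$, which is a clique, so every neighbour of $v$ surviving in $V(\mathcal{C}_i)\setminus T_i$ lies in a single component. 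The hypothesis $F_i\not\subseteq T_i\cup\{v\}$ guarantees at least one surviving neighbour, so reinstating $v$ merely attaches it to one component, creating and merging nothing; hence $c_{G^{\mathcal{C}_i}}(T_i)$ equals the count with $v$ deleted. Consequently $c_{G^{\mathcal{C}}}(T)=c_{G^{\mathcal{C}_1}}(T_1)+c_{G^{\mathcal{C}_2}}(T_2)$, and the extra $+1$ in $|T|$ now cancels the $-1$ in $n$, again yielding the claimed additivity. Assembling the two cases completes the argument.
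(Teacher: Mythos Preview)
Your argument is correct. You reduce to the height formula $\mathrm{height}\,P_T(G)=n+|T|-c_G(T)$ and then handle the two families $\mathscr{A}$ and $\mathscr{B}$ of Lemma~\ref{glu5} by direct component-counting; the key point in the $\mathscr{B}$ case, namely that deleting the free vertex $v$ from $(G^{\mathcal{C}_i})_{V(\mathcal{C}_i)\setminus T_i}$ does not change the number of components because the surviving neighbours of $v$ lie in the clique $F_i\setminus\{v\}$ and $F_i\not\subseteq T_i\cup\{v\}$ ensures at least one survives, is exactly right.

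The paper, however, does not carry out any of this bookkeeping. It argues in one line: since $\mathcal{C}$ is a gluing of $\mathcal{C}_1$ and $\mathcal{C}_2$ at $v$, the associated graph $G^{\mathcal{C}}$ is a gluing of $G^{\mathcal{C}_1}$ and $G^{\mathcal{C}_2}$ at $v$, and then one invokes \cite{raufrin}, Corollary~2.4, which is the identical statement for graphs; the conclusion follows from $P_T(\mathcal{C})=P_T(G^{\mathcal{C}})$. In other words, the paper's strategy throughout is to transport every clutter statement to the associated graph and quote the corresponding Rauf--Rinaldo result, whereas you have essentially reproved Rauf--Rinaldo's Corollary~2.4 from scratch. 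Your route is self-contained and makes the combinatorics explicit; the paper's route is shorter but depends entirely on the cited reference.
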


\proof 
$\mathcal{C}$ is a gluing of $\mathcal{C}_{1}$ and $\mathcal{C}_{2}$ 
at the free vertex $v$, therefore, $G^{\mathcal{C}}$ is a gluing of 
$G^{\mathcal{C}_{1}}$ and $G^{\mathcal{C}_{2}}$ at $v$. By 
\cite{raufrin} (Corollary 2.4), we have 
\begin{align*} 
\mathrm{height}\,P_{T}(\mathcal{C})= \mathrm{height}\,P_{T}(G^{\mathcal{C}}) &=\mathrm{height}\,P_{T_{1}}(G^{\mathcal{C}_{1}})+\mathrm{height}\,P_{T_{2}}(G^{\mathcal{C}_{2}})\\ &=\mathrm{height}\,P_{T_{1}}(\mathcal{C}_{1})+\mathrm{height}\,P_{T_{2}}(\mathcal{C}_{2}). \qed
\end{align*}

\begin{lemma}\label{glu7}
Let $\mathcal{C}$ be a clutter. The following conditions are equivalent:
\begin{enumerate}
\item[(a)] $J_{\mathcal{C}}$ is unmixed.
\item[(b)] For all $T\in\mathscr{C}(\mathcal{C})$, we have 
$c(T)=\vert T\vert +1$.
\end{enumerate}
\end{lemma}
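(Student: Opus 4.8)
The plan is to transfer everything to the associated graph $G^{\mathcal{C}}$ and then read off the equivalence from a height computation on the minimal primes. Throughout I use the standing assumption of this section that $\mathcal{C}$ is connected, so that $G^{\mathcal{C}}$ is connected as well by Proposition \ref{prop1}.

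First I would observe that each ingredient in the statement depends only on $G^{\mathcal{C}}$. Since $J_{\mathcal{C}}=J_{G^{\mathcal{C}}}$ by Remark \ref{re1}, unmixedness of $J_{\mathcal{C}}$ is the same as unmixedness of $J_{G^{\mathcal{C}}}$; moreover $\mathscr{C}(\mathcal{C})=\mathscr{C}(G^{\mathcal{C}})$, and for every $T$ the integer $c(T)$, the number of connected components of the induced subgraph $(G^{\mathcal{C}})_{[n]\setminus T}$, is computed entirely inside $G^{\mathcal{C}}$. Hence it suffices to prove the equivalence for the graph $G^{\mathcal{C}}$, and the problem reduces to the classical unmixedness criterion for binomial edge ideals of graphs.

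Next I would invoke Lemma \ref{lem1} to identify the minimal primes: $\mathscr{M}(\mathcal{C})=\{\,P_{T}(\mathcal{C})\mid T\in\mathscr{C}(\mathcal{C})\,\}$. By definition $J_{\mathcal{C}}$ is unmixed exactly when all of these primes have a common height. Recalling from \cite{hhhrkara} that $\mathrm{height}\,P_{T}(G^{\mathcal{C}})=n+\vert T\vert-c(T)$, I note that $\emptyset\in\mathscr{C}(\mathcal{C})$ yields the minimal prime $P_{\emptyset}(\mathcal{C})$ of height $n-c(\emptyset)=n-1$, the last equality using connectedness of $G^{\mathcal{C}}$. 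Therefore unmixedness is equivalent to $\mathrm{height}\,P_{T}(\mathcal{C})=n-1$ for every $T\in\mathscr{C}(\mathcal{C})$, that is, $n+\vert T\vert-c(T)=n-1$, which rearranges to $c(T)=\vert T\vert+1$. Running this chain of equalities in both directions gives (a) $\Leftrightarrow$ (b).

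I expect no serious obstacle, since the two substantive inputs---the description of the minimal primes (Lemma \ref{lem1}) and the height formula for $P_{T}$ from \cite{hhhrkara}---are already in hand. The only step demanding care is pinning down the height of the generic minimal prime $P_{\emptyset}$ as $n-1$; this is precisely where connectedness of $G^{\mathcal{C}}$ enters, and it is what forces the common height of an unmixed $J_{\mathcal{C}}$ to be $n-1$ rather than some other value.
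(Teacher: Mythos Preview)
Your proof is correct and follows essentially the same approach as the paper: both reduce to the associated graph via $J_{\mathcal{C}}=J_{G^{\mathcal{C}}}$ and $\mathscr{C}(\mathcal{C})=\mathscr{C}(G^{\mathcal{C}})$. The only difference is that the paper then simply cites \cite{raufrin} (Lemma~2.5) for the graph case, whereas you unpack that lemma explicitly using the height formula $\mathrm{height}\,P_{T}=n+\lvert T\rvert-c(T)$ from \cite{hhhrkara} and the observation $\mathrm{height}\,P_{\emptyset}=n-1$ for a connected graph.
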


\begin{proof}
$J_{\mathcal{C}}=J_{G^{\mathcal{C}}}$ and 
$\mathscr{C}(\mathcal{C})=\mathscr{C}(G^{\mathcal{C}})$, 
the proof now follows from \cite{raufrin} (Lemma 2.5).
\end{proof}

\begin{proposition}\label{gluunmixed}
Let $\mathcal{C}$ be a gluing of the clutters $\mathcal{C}_{1}$ 
and $\mathcal{C}_{2}$ at the free vertex $v$. Then 
$J_{\mathcal{C}}$ is unmixed if and only if $J_{\mathcal{C}_{1}}$ and $J_{\mathcal{C}_{2}}$ are unmixed.
\end{proposition}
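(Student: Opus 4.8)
The plan is to run everything through the unmixedness criterion of Lemma \ref{glu7} together with the decomposition $\mathscr{C}(\mathcal{C}) = \mathscr{A} \cup \mathscr{B}$ from Lemma \ref{glu5}, reducing the statement to a short identity for the number $c(T)$ of connected components of $(G^{\mathcal{C}})_{\overline{T}}$. Since $\mathcal{C}$ is a gluing of $\mathcal{C}_1$ and $\mathcal{C}_2$ at $v$, the graph $G^{\mathcal{C}}$ is a gluing of $G^{\mathcal{C}_1}$ and $G^{\mathcal{C}_2}$ at $v$ by Proposition \ref{prop3}, and I will write $c_i(T_i)$ for the number of connected components of $(G^{\mathcal{C}_i})_{\overline{T_i}}$.

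The first step is to relate $c(T)$ to $c_1(T_1)$ and $c_2(T_2)$. Because $v$ is free in each $\mathcal{C}_i$, Proposition \ref{glu3} gives $v \notin T_i$, hence $T_1 \cap T_2 = \emptyset$. If $T \in \mathscr{A}$, then $v \notin T$ and $(G^{\mathcal{C}})_{\overline{T}}$ is the gluing of $(G^{\mathcal{C}_1})_{\overline{T_1}}$ and $(G^{\mathcal{C}_2})_{\overline{T_2}}$ at the surviving vertex $v$, so the two components through $v$ coalesce and $c(T) = c_1(T_1) + c_2(T_2) - 1$. If $T \in \mathscr{B}$, then $v \in T$ and deleting $v$ severs the two sides; here the condition $F_i \not\subseteq T_i \cup \{v\}$ ensures that $v$ retains a neighbour in each $(G^{\mathcal{C}_i})_{\overline{T_i}}$, and since $v$ is free its neighbourhood is a clique, so $v$ is simplicial and its deletion leaves each side's component count unchanged, giving $c(T) = c_1(T_1) + c_2(T_2)$. (Equivalently, these are the component-count shadows of the height identity in Corollary \ref{glu6}.)

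Combining these with $|T| = |T_1| + |T_2|$ on $\mathscr{A}$ and $|T| = |T_1| + |T_2| + 1$ on $\mathscr{B}$, the criterion $c(T) = |T| + 1$ collapses in both cases to the single equation
$$ \bigl(c_1(T_1) - |T_1| - 1\bigr) + \bigl(c_2(T_2) - |T_2| - 1\bigr) = 0. $$
For the backward implication, if $J_{\mathcal{C}_1}$ and $J_{\mathcal{C}_2}$ are unmixed then each bracket vanishes for every $T_i \in \mathscr{C}(\mathcal{C}_i)$ by Lemma \ref{glu7}, so the displayed equation holds for all $T \in \mathscr{C}(\mathcal{C})$ and $J_{\mathcal{C}}$ is unmixed. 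For the forward implication I would use that $\emptyset \in \mathscr{C}(\mathcal{C}_i)$ (the prime $P_\emptyset$ is always minimal for a connected clutter) with $c_i(\emptyset) = 1$: given $T_1 \in \mathscr{C}(\mathcal{C}_1)$, the set $T = T_1 \cup \emptyset \in \mathscr{A}$ satisfies $c(T) = c_1(T_1)$ and $|T| = |T_1|$, so unmixedness of $J_{\mathcal{C}}$ forces $c_1(T_1) = |T_1| + 1$, and symmetrically for $\mathcal{C}_2$.

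I expect the type-$\mathscr{B}$ component count to be the main obstacle: one must check that deleting the free vertex $v$ changes neither side's number of components, which is exactly where both hypotheses are used --- freeness makes $v$ simplicial (its neighbours form a clique, so the deletion cannot break a component apart), and $F_i \not\subseteq T_i \cup \{v\}$ keeps $v$ non-isolated (so the deletion cannot destroy a component). After that, only the arithmetic above remains. Alternatively, since $J_{\mathcal{C}} = J_{G^{\mathcal{C}}}$ and $G^{\mathcal{C}}$ is a gluing of $G^{\mathcal{C}_1}$ and $G^{\mathcal{C}_2}$, one could bypass the computation and invoke the corresponding graph statement from \cite{raufrin} directly.
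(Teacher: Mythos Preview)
Your argument is correct. The paper, however, takes the short route you mention only at the end: it simply records that $J_{\mathcal{C}} = J_{G^{\mathcal{C}}}$ and $J_{\mathcal{C}_i} = J_{G^{\mathcal{C}_i}}$, observes that $G^{\mathcal{C}}$ is the gluing of $G^{\mathcal{C}_1}$ and $G^{\mathcal{C}_2}$ at $v$, and invokes Proposition~2.6 of \cite{raufrin} for graphs. Your main argument instead unpacks what that graph proposition actually says, running the unmixedness criterion of Lemma~\ref{glu7} through the decomposition $\mathscr{C}(\mathcal{C}) = \mathscr{A} \cup \mathscr{B}$ of Lemma~\ref{glu5} and checking the component counts by hand. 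This is essentially a reproof of \cite{raufrin}, Proposition~2.6, carried out in the clutter language; it is self-contained and makes visible exactly where freeness of $v$ and the side condition $F_i \not\subseteq T_i \cup \{v\}$ on $\mathscr{B}$ are used. The paper's one-line reduction is shorter and in keeping with its overall strategy of transferring every statement to $G^{\mathcal{C}}$, but it hides the combinatorics you have written out.
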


\begin{proof}
$J_{\mathcal{C}}=J_{G^{\mathcal{C}}}$, 
$J_{\mathcal{C}_{i}}=J_{G^{\mathcal{C}_{i}}}$ and 
the result is true for graphs by \cite{raufrin} (Proposition 2.6), hence 
it is true for clutters as well.
\end{proof}

\begin{theorem}\label{gluCM}
Let $\mathcal{C}$ be a gluing of the clutters $\mathcal{C}_{1}$ and $\mathcal{C}_{2}$ at the free vertex $v$. Then $\mathrm{depth}\,S/J_{\mathcal{C}}=\mathrm{depth}\,S_{1}/J_{\mathcal{C}_{1}}+\mathrm{depth}\,S_{2}/J_{\mathcal{C}_{2}}-2$, where 
$S_{i}=K[\{ x_{j},y_{k} \mid j\in V(\mathcal{C}_{1}), \, k\in V(\mathcal{C}_{2})\}]$. In addition, $J_{\mathcal{C}}$ is Cohen-Macaulay if and only if $J_{\mathcal{C}_{1}}$ and $J_{\mathcal{C}_{2}}$ are Cohen-Macaulay.
\end{theorem}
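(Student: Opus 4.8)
The plan is to reduce the entire statement to the corresponding theorem for graphs, exactly in the spirit of the preceding reductions in this section. The one new ingredient I would isolate first is that the gluing operation on clutters is compatible with passage to the associated graph: if $\mathcal{C} = \mathcal{C}_1 \cup \mathcal{C}_2$ is a gluing at the free vertex $v$, then $G^{\mathcal{C}}$ is a gluing of $G^{\mathcal{C}_1}$ and $G^{\mathcal{C}_2}$ at $v$. This is immediate from Proposition \ref{prop3}, which gives $G^{\mathcal{C}} = G^{\mathcal{C}_1} \cup G^{\mathcal{C}_2}$, together with the equalities $V(G^{\mathcal{C}_i}) = V(\mathcal{C}_i)$ forcing $V(G^{\mathcal{C}_1}) \cap V(G^{\mathcal{C}_2}) = \{v\}$, and Corollary \ref{glu2}, which transfers the free-vertex hypothesis from $\Delta(\mathcal{C}_i)$ to $\Delta(G^{\mathcal{C}_i})$.

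Once this compatibility is in hand, I would invoke Remark \ref{re1} in the form $J_{\mathcal{C}} = J_{G^{\mathcal{C}}}$ and $J_{\mathcal{C}_i} = J_{G^{\mathcal{C}_i}}$, so that the three quotient rings appearing in the depth formula coincide with those attached to the associated graphs. The ring $S$ is generated by the $x$- and $y$-variables indexed by $V(\mathcal{C}) = V(\mathcal{C}_1) \cup V(\mathcal{C}_2)$, and since $V(\mathcal{C}_1) \cap V(\mathcal{C}_2) = \{v\}$, the variable sets of $S_1$ and $S_2$ overlap exactly in the pair attached to $v$; this is precisely the ambient-ring configuration of the graph gluing theorem of \cite{raufrin}. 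The depth formula then follows verbatim from the graph statement applied to $G^{\mathcal{C}} = G^{\mathcal{C}_1} \cup G^{\mathcal{C}_2}$.

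For the Cohen--Macaulay equivalence I would pair the depth formula with a matching Krull-dimension identity. By Proposition \ref{gluunmixed}, $J_{\mathcal{C}}$ is unmixed if and only if $J_{\mathcal{C}_1}$ and $J_{\mathcal{C}_2}$ are unmixed; and since Cohen--Macaulayness entails unmixedness, I may work under the unmixed hypothesis on all three ideals. Lemma \ref{glu7} then forces $c(T) = |T| + 1$ for every relevant $T$, which pins each $\dim S/J$ to one more than the number of vertices; combined with $|V(\mathcal{C})| = |V(\mathcal{C}_1)| + |V(\mathcal{C}_2)| - 1$ this yields $\dim S/J_{\mathcal{C}} = \dim S_1/J_{\mathcal{C}_1} + \dim S_2/J_{\mathcal{C}_2} - 2$, a formula parallel to the one for depth. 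Comparing the two shows that $S/J_{\mathcal{C}}$ is Cohen--Macaulay precisely when both $S_i/J_{\mathcal{C}_i}$ are, giving the claim.

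I do not anticipate a genuine obstacle in the clutter step itself, because the analytic heart of the argument --- the short exact sequence (or local cohomology computation) underlying the depth formula --- already resides in the graph case of \cite{raufrin}, and the assignment $\mathcal{C} \mapsto G^{\mathcal{C}}$ carries each hypothesis and each invariant across unchanged. The only point that demands care is the bookkeeping: verifying that the three ambient polynomial rings and their overlap at $v$ are identified correctly, and that the reduction to the connected case noted at the start of this section legitimately lets me apply the dimension identity needed for the Cohen--Macaulay half.
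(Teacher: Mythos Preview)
Your proposal is correct and follows the paper's approach: reduce to the associated graphs via Proposition~\ref{prop3}, Corollary~\ref{glu2}, and Remark~\ref{re1}, then invoke \cite{raufrin} (Theorem~2.7). The only difference is that your third paragraph re-derives the Cohen--Macaulay equivalence from the depth formula together with an unmixedness/dimension count, whereas the paper simply cites \cite{raufrin} (Theorem~2.7) for both the depth identity and the Cohen--Macaulay equivalence in one stroke, making the argument shorter.
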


\begin{proof}
$\mathcal{C}$ is a gluing of $\mathcal{C}_{1}$ and $\mathcal{C}_{2}$ at the 
free vertex $v$, therefore $G^{\mathcal{C}}$ is a gluing of 
$G^{\mathcal{C}_{1}}$ and $G^{\mathcal{C}_{2}}$ at $v$. Then from 
\cite{raufrin} (Theorem 2.7) we have 
$$ \mathrm{depth}\,S/J_{G^{\mathcal{C}}}=\mathrm{depth}\,S_{1}/J_{G^{\mathcal{C}_{1}}}+\mathrm{depth}\,S_{2}/J_{G^{\mathcal{C}_{2}}}-2.$$ 
As $J_{\mathcal{C}}=J_{G^{\mathcal{C}}}$ and $J_{\mathcal{C}_{i}}=J_{G^{\mathcal{C}_{i}}}$, for $i=1,2$, the proof follows.
\end{proof}

\begin{definition}\label{gengluing}{\rm 
Let $\mathcal{C}=\mathcal{C}_{1}\cup\cdots\cup \mathcal{C}_{r}$ be a connected clutter satisfying the following properties for all $i,j,k\in [r]$, which are pairwise different. The following statements are true:
\begin{enumerate}
\item[(1)] $\vert V(\mathcal{C}_{i}\cap V(\mathcal{C}_{j})\vert\leq 1$ and $V(\mathcal{C}_{i})\cap V(\mathcal{C}_{j})\cap V(\mathcal{C}_{k}) = \phi; $

\item[(2)] If $V(\mathcal{C}_{i})\cap V(\mathcal{C}_{j})=\{ v\}$, then $v$ 
is a free vertex in $\Delta (\mathcal{C}_{i})$ and $\Delta (\mathcal{C}_{j})$ 
both.
\end{enumerate}
We say that $\mathcal{C}$ is the gluing of $\mathcal{C}_{1}\ldots,\mathcal{C}_{r}$.
}
\end{definition}
\medskip

In order to characterize Cohen-Macaulay binomial edge ideals in this case, we associate with $\mathcal{C}$ a graph $G^{\mathcal{C}}_{f}$, whose vertex set 
is $V(G^{\mathcal{C}}_{f}) = \{ 1,\ldots,r\}$ and the edge set is 
$E(G^{\mathcal{C}}_{f}) = \{\{ i,j\} : V(\mathcal{C}_{i})\cap V(\mathcal{C}_{j})\not = \phi \}$. The graph $G^{\mathcal{C}}_{f}$ is a connected graph 
since $\mathcal{C}$ is a connected clutter.

\begin{corollary}\label{glu8}
Let $\mathcal{C}=\mathcal{C}_{1}\cup\cdots\cup \mathcal{C}_{r}$ be a connected clutter satisfying properties (1), (2), and assume that the graph $G^{\mathcal{C}}_{f}$ is a tree. Let $S_{i}= K[\{ x_{j}, y_{j} : j\in V(\mathcal{C}_{i})\}]$, for $i=1,\ldots,r$. Then 
$$ \mathrm{depth}\, S/J_{\mathcal{C}}= \mathrm{depth}\, S_{1}/J_{\mathcal{C}_{1}} +\cdots +\mathrm{depth}\, S_{r}/J_{\mathcal{C}_{r}} - 2(r-1).$$
Moreover, $J_{\mathcal{C}}$ is Cohen-Macaulay if and only if each $J_{\mathcal{C}_{i}}$
is Cohen-Macaulay for $i=1,\ldots,r$.
\end{corollary}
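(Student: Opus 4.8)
The plan is to argue by induction on the number $r$ of glued pieces, using Theorem \ref{gluCM} as the engine of the inductive step. For $r=1$ the statement is vacuous, and for $r=2$ it is exactly Theorem \ref{gluCM}. So I would assume $r\geq 3$ and that the result already holds for every gluing of fewer than $r$ clutters whose piece-graph is a tree.

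Since $G^{\mathcal{C}}_{f}$ is a tree on $\{1,\ldots,r\}$, it has a leaf; after relabelling I may assume $r$ is a leaf, adjacent in $G^{\mathcal{C}}_{f}$ to a unique index $j$. Set $\mathcal{C}'=\mathcal{C}_{1}\cup\cdots\cup\mathcal{C}_{r-1}$ and let $S'$ be the polynomial ring on the variables indexed by $V(\mathcal{C}')$. Because $r$ is a leaf, $V(\mathcal{C}')\cap V(\mathcal{C}_{r})=V(\mathcal{C}_{j})\cap V(\mathcal{C}_{r})$, which by property (1) of Definition \ref{gengluing} is a single vertex $\{v\}$. Using Proposition \ref{prop3} one checks that $G^{\mathcal{C}'}_{f}=G^{\mathcal{C}}_{f}\setminus\{r\}$ is a tree on $r-1$ vertices, that hypotheses (1) and (2) are inherited by the sub-collection $\mathcal{C}_{1},\ldots,\mathcal{C}_{r-1}$, and that $\mathcal{C}'$ is connected (deleting a leaf keeps the tree connected, and each $\mathcal{C}_{i}$ is itself connected, since a disconnected piece would force a cycle in $G^{\mathcal{C}}_{f}$). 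Hence the induction hypothesis applies to $\mathcal{C}'$.

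The crux is to verify that $\mathcal{C}=\mathcal{C}'\cup\mathcal{C}_{r}$ is a gluing in the sense of Definition \ref{gluing}, i.e. that $v$ is a free vertex of both $\Delta(\mathcal{C}_{r})$ and $\Delta(\mathcal{C}')$. Freeness in $\Delta(\mathcal{C}_{r})$ is immediate from property (2). For $\Delta(\mathcal{C}')$ I would first invoke property (1) — the emptiness of triple intersections — to conclude that $v$ lies in $\mathcal{C}_{j}$ and in no other piece of $\mathcal{C}'$: if $v$ also lay in some $\mathcal{C}_{k}$ with $k\neq j,r$, then $v\in V(\mathcal{C}_{j})\cap V(\mathcal{C}_{k})\cap V(\mathcal{C}_{r})$, a contradiction. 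Consequently every edge of $G^{\mathcal{C}'}$ incident to $v$ already lies in $G^{\mathcal{C}_{j}}$, so every clique of $\mathcal{C}'$ containing $v$ is a clique of $\mathcal{C}_{j}$ and cannot be enlarged outside $\mathcal{C}_{j}$. Thus the facets of $\Delta(\mathcal{C}')$ through $v$ coincide with those of $\Delta(\mathcal{C}_{j})$ through $v$; since $v$ is free in $\Delta(\mathcal{C}_{j})$ by property (2), it is free in $\Delta(\mathcal{C}')$ as well, passing freely between a clutter and its associated graph via Proposition \ref{glu1} and Corollary \ref{glu2}.

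With the gluing structure established, Theorem \ref{gluCM} gives
$$\mathrm{depth}\,S/J_{\mathcal{C}}=\mathrm{depth}\,S'/J_{\mathcal{C}'}+\mathrm{depth}\,S_{r}/J_{\mathcal{C}_{r}}-2,$$
while the induction hypothesis gives $\mathrm{depth}\,S'/J_{\mathcal{C}'}=\sum_{i=1}^{r-1}\mathrm{depth}\,S_{i}/J_{\mathcal{C}_{i}}-2(r-2)$; substituting and simplifying $-2(r-2)-2=-2(r-1)$ yields the claimed formula. For the Cohen-Macaulay assertion, Theorem \ref{gluCM} makes $J_{\mathcal{C}}$ Cohen-Macaulay if and only if $J_{\mathcal{C}'}$ and $J_{\mathcal{C}_{r}}$ both are, and the induction hypothesis identifies Cohen-Macaulayness of $J_{\mathcal{C}'}$ with the simultaneous Cohen-Macaulayness of $J_{\mathcal{C}_{1}},\ldots,J_{\mathcal{C}_{r-1}}$; combining the two gives the equivalence for all $r$ pieces. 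I expect the only genuinely delicate point to be the freeness of $v$ in $\Delta(\mathcal{C}')$ in the inductive step — everything else is bookkeeping — so I would isolate that as the key lemma and lean on property (1) to pin down exactly where $v$ can appear.
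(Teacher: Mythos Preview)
Your induction argument is correct, and in particular your handling of the key step --- showing that the shared vertex $v$ is free in $\Delta(\mathcal{C}')$ --- is sound: once you know $v$ lies only in $\mathcal{C}_j$ among the first $r-1$ pieces, property~(1) forces every edge of $G^{\mathcal{C}'}$ between two vertices of $V(\mathcal{C}_j)$ to already lie in $G^{\mathcal{C}_j}$, so cliques through $v$ in $\mathcal{C}'$ and in $\mathcal{C}_j$ coincide. (Your justification that each $\mathcal{C}_i$ is connected is slightly terse; a more direct route to the connectedness of $\mathcal{C}'$ is to note that any path in $G^{\mathcal{C}}$ between two vertices of $V(\mathcal{C}')$ can only enter or leave $V(\mathcal{C}_r)\setminus\{v\}$ through $v$, and hence can be short-cut to a path in $G^{\mathcal{C}'}$.)

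However, the paper's own proof takes a different and much shorter route: it does not induct at all, but simply observes that a gluing $\mathcal{C}=\mathcal{C}_1\cup\cdots\cup\mathcal{C}_r$ of clutters yields a gluing $G^{\mathcal{C}}=G^{\mathcal{C}_1}\cup\cdots\cup G^{\mathcal{C}_r}$ of the associated graphs with the same piece-graph $G^{\mathcal{C}}_f$, and then invokes Corollary~2.8 of \cite{raufrin} for graphs together with $J_{\mathcal{C}}=J_{G^{\mathcal{C}}}$ and $J_{\mathcal{C}_i}=J_{G^{\mathcal{C}_i}}$. This fits the paper's overall strategy of reducing every clutter statement to its graph analogue. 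Your approach, by contrast, is effectively an independent proof of the $r$-piece result from the two-piece case (Theorem~\ref{gluCM}); it is more self-contained and makes explicit the combinatorial reason the inductive step works, at the cost of reproving what \cite{raufrin} already established for graphs.
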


\begin{proof}
Since $\mathcal{C}$ is the gluing of $\mathcal{C}_{1},\ldots,\mathcal{C}_{r}$, $G^{\mathcal{C}}$ is the gluing of $G^{\mathcal{C}_{1}},\ldots,G^{\mathcal{C}_{r}}$. 
It is given that $G^{\mathcal{C}}_{f}$ is a tree. Therefore, from \cite{raufrin} (Corollary 2.8) we have 
$$ \mathrm{depth}\, S/J_{G^{\mathcal{C}}}= \mathrm{depth}\, S_{1}/J_{G^{\mathcal{C}_{1}}} +\cdots +\mathrm{depth}\, S_{r}/J_{G^{\mathcal{C}_{r}}} - 2(r-1).$$
Now we have $J_{\mathcal{C}}=J_{G^{\mathcal{C}}}$ and $J_{\mathcal{C}_{i}}=J_{G^{\mathcal{C}_{i}}}$, hence the assertion follows.
\end{proof}

\begin{corollary}\label{glu9}
Let $\mathcal{C}$ be a clutter such that $G^{\mathcal{C}}$ is a chordal graph and 
$\mathcal{C}=\mathcal{C}_{1}\cup\cdots\cup \mathcal{C}_{r}$, such that $\vert V(\mathcal{C}_{i})\cap
V(\mathcal{C}_{j})\vert\leq 1$ for $i\not= j\in\{ 1,\ldots,r\}$. Assume that each $\mathcal{C}_{i}$ is maximal clique. Then the following conditions are equivalent:
\begin{enumerate}
\item[(a)] $J_{\mathcal{C}}$ is Cohen-Macaulay;
\item[(b)] $J_{\mathcal{C}}$ is unmixed;
\item[(c)] $V(\mathcal{C}_{i})\cap V(\mathcal{C}_{j})\cap V(\mathcal{C}_{k})=\phi$ for $i\not=j\not=k\in [r]$.
\end{enumerate}
\end{corollary}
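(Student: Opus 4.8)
The plan is to run the same reduction that drives the whole section: transfer everything to the associated graph $G^{\mathcal{C}}$ and invoke the chordal-graph version of the statement from \cite{raufrin}. First I would collect the dictionary entries already available: $J_{\mathcal{C}}=J_{G^{\mathcal{C}}}$ by Remark~\ref{re1}, so ``$J_{\mathcal{C}}$ Cohen-Macaulay'' and ``$J_{\mathcal{C}}$ unmixed'' mean exactly the corresponding statements for $J_{G^{\mathcal{C}}}$; $G^{\mathcal{C}}=G^{\mathcal{C}_{1}}\cup\cdots\cup G^{\mathcal{C}_{r}}$ by Proposition~\ref{prop3}; and $\Delta(\mathcal{C})=\Delta(G^{\mathcal{C}})$, $\Delta(\mathcal{C}_{i})=\Delta(G^{\mathcal{C}_{i}})$ by Proposition~\ref{glu1}. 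Because $V(G^{\mathcal{C}_{i}})=V(\mathcal{C}_{i})$ by construction of the associated graph, the hypothesis $|V(\mathcal{C}_{i})\cap V(\mathcal{C}_{j})|\le 1$ and condition (c) are literally statements about the subgraphs $G^{\mathcal{C}_{i}}$.

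The one point that needs care is that ``each $\mathcal{C}_{i}$ is a maximal clique'' passes to the graph, i.e. that each $G^{\mathcal{C}_{i}}$ is a maximal clique of $G^{\mathcal{C}}$. Here I would argue that $\mathcal{C}_{i}$ being a clique makes $G^{\mathcal{C}_{i}}$ the complete graph on $V(\mathcal{C}_{i})$, while maximality of $V(\mathcal{C}_{i})$ as a face of $\Delta(\mathcal{C})$ is, by Proposition~\ref{glu1}, the same as maximality as a face of $\Delta(G^{\mathcal{C}})$, that is, as a clique of $G^{\mathcal{C}}$. Since $G^{\mathcal{C}}$ is chordal by hypothesis, all the hypotheses of the chordal-graph statement in \cite{raufrin} are now in force, and the equivalence (a) $\Leftrightarrow$ (b) $\Leftrightarrow$ (c) for $J_{G^{\mathcal{C}}}$ transfers verbatim to $J_{\mathcal{C}}$.

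If instead one wants a self-contained argument, the implications split cleanly. (a) $\Rightarrow$ (b) is the standard fact that a Cohen-Macaulay ideal is unmixed. For (c) $\Rightarrow$ (a), I would show that under chordality, pairwise intersection $\le 1$, and (c), the graph $G^{\mathcal{C}}_{f}$ introduced before Corollary~\ref{glu8} is a tree, and then apply Corollary~\ref{glu8}, noting that each $J_{\mathcal{C}_{i}}$ is the binomial edge ideal of a complete graph and hence Cohen-Macaulay. For (b) $\Rightarrow$ (c) I would argue contrapositively: a vertex lying in three of the cliques produces, via the unmixedness count $c(T)=|T|+1$ of Lemma~\ref{glu7}, a set $T\in\mathscr{C}(\mathcal{C})$ violating that equality. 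I expect the genuine obstacle to be the tree-ness of $G^{\mathcal{C}}_{f}$: a cycle there yields three maximal cliques meeting pairwise in single vertices, whose meeting points span a triangle contained in some maximal clique, and chasing this clique forces either a size-$2$ pairwise intersection or a nonempty triple intersection, contradicting the hypotheses. This chordal bookkeeping is exactly what the cited result in \cite{raufrin} encapsulates, which is why the reduction in the first two paragraphs is the efficient route.
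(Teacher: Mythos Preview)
Your primary approach is exactly the paper's: reduce to $G^{\mathcal{C}}$ via Remark~\ref{re1}, Proposition~\ref{prop3}, and Proposition~\ref{glu1}, check that each $G^{\mathcal{C}_{i}}$ is a maximal clique of $G^{\mathcal{C}}$, and then invoke \cite{raufrin} (the paper cites specifically Lemma~2.10 and Corollary~2.11 there). Your careful justification of why maximality of the clique transfers through $\Delta(\mathcal{C})=\Delta(G^{\mathcal{C}})$ is in fact more explicit than what the paper writes, and your third-paragraph self-contained sketch is extra material the paper does not attempt.
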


\begin{proof}
$G^{\mathcal{C}}$ is a chordal graph and $G^{\mathcal{C}}=G^{\mathcal{C}_{1}}\cup\cdots\cup G^{\mathcal{C}_{r}}$, such that $\vert V(G^{\mathcal{C}_{i}})\cap
V(G^{\mathcal{C}_{j}})\vert\leq 1$ for $i\not= j\in\{ 1,\ldots,r\}$. Each $G^{\mathcal{C}_{i}}$ is a maximal clique as $\mathcal{C}_{i}$ is maximal clique. Then using \cite{raufrin} (Lemma 2.10 and Corollary 2.11), the proof follows because $J_{\mathcal{C}}=J_{G^{\mathcal{C}}}$.
\end{proof}

\section{Cones on Clutters and Binomial Edge Ideals}
In this section we first define the cone of a clutter and prove the 
unmixed and the Cohen-Macaulay properties of the binomial edge ideal 
of the cone. An important observation is Theorem \ref{coneconverseraufrin}.

\begin{definition}\label{cone}{\rm Let $\mathcal{D}$ be a clutter and $v\not\in V(\mathcal{D})$ be a vertex. Then $\mathcal{C}= \mathrm{cone}\,(v,\mathcal{D})$ is a clutter defined as $V(\mathcal{C})=V(\mathcal{D})\cup\{ v\}$ 
and $E(\mathcal{C})= \{\{ v,i\} \mid i\in V(\mathcal{D})\} \cup E(\mathcal{D})$.
}
\end{definition}

\begin{proposition}\label{cone1}
Let $\mathcal{C}$ be a clutter. If $\mathcal{C}= \mathrm{cone}\,(v,\mathcal{D})$,  
then $G^{\mathcal{C}}= \mathrm{cone}\,(v,G^{\mathcal{D}})$.
\end{proposition}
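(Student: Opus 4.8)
The plan is to prove the set-theoretic equality of the two graphs
$G^{\mathcal{C}}$ and $\mathrm{cone}\,(v,G^{\mathcal{D}})$ by showing that
they have the same vertex set and the same edge set. Recall that
$\mathrm{cone}\,(v,G^{\mathcal{D}})$ is, by the analogue of
Definition \ref{cone} for graphs, the graph on vertex set
$V(G^{\mathcal{D}})\cup\{v\}$ whose edges are all pairs $\{v,i\}$ with
$i\in V(G^{\mathcal{D}})$ together with the edges of $G^{\mathcal{D}}$.
The vertex-set equality is immediate: by Definition \ref{cone} we have
$V(\mathcal{C})=V(\mathcal{D})\cup\{v\}$, and since the associated graph
inherits its vertex set (Definition \ref{assograph}), this equals
$V(G^{\mathcal{D}})\cup\{v\}=V(\mathrm{cone}\,(v,G^{\mathcal{D}}))$.

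For the edge sets I would argue by a double inclusion, splitting into the
two natural cases according to whether or not the apex $v$ is one of the
two endpoints. First take $\{i,j\}\in E(G^{\mathcal{C}})$. By
Definition \ref{assograph} there is an edge $e\in E(\mathcal{C})$ with
$\{i,j\}\subseteq e$. By Definition \ref{cone} either
$e=\{v,k\}$ for some $k\in V(\mathcal{D})$, or $e\in E(\mathcal{D})$.
In the first case $\{i,j\}\subseteq\{v,k\}$ forces $\{i,j\}=\{v,k\}$, which
is an edge of the cone through the apex. In the second case
$\{i,j\}\subseteq e\in E(\mathcal{D})$ gives
$\{i,j\}\in E(G^{\mathcal{D}})\subseteq E(\mathrm{cone}\,(v,G^{\mathcal{D}}))$.
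Either way $\{i,j\}$ is an edge of the cone, so
$E(G^{\mathcal{C}})\subseteq E(\mathrm{cone}\,(v,G^{\mathcal{D}}))$.

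For the reverse inclusion, take an edge of
$\mathrm{cone}\,(v,G^{\mathcal{D}})$. If it is of the form $\{v,i\}$ with
$i\in V(\mathcal{D})$, then $\{v,i\}\in E(\mathcal{C})$ by
Definition \ref{cone}, so $\{v,i\}\in E(G^{\mathcal{C}})$ directly from
Definition \ref{assograph}. If instead it is an edge of
$G^{\mathcal{D}}$, then there is $e\in E(\mathcal{D})$ containing both
endpoints, and since $E(\mathcal{D})\subseteq E(\mathcal{C})$ the same edge
witnesses membership in $E(G^{\mathcal{C}})$. This establishes
$E(\mathrm{cone}\,(v,G^{\mathcal{D}}))\subseteq E(G^{\mathcal{C}})$ and
completes the proof.

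I do not anticipate a serious obstacle here; the statement is a routine
unwinding of the definitions, entirely parallel in spirit to
Proposition \ref{prop4} and Proposition \ref{glu1}. The only point
requiring a little care is the edge $e=\{v,k\}$ case, where one must note
that a two-element edge contributes exactly one edge to the associated
graph (namely itself), so that no spurious edges involving $v$ appear; all
edges through the apex in $G^{\mathcal{C}}$ are precisely the cone edges
$\{v,i\}$, and these are already present by construction.
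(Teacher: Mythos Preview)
Your proof is correct and is essentially identical to the paper's own argument: both establish equality of vertex sets directly from the definitions and then prove the edge-set equality by the same double inclusion, splitting into the two cases $e=\{v,k\}$ versus $e\in E(\mathcal{D})$ in each direction.
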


\begin{proof}
We are given $\mathcal{C}= \mathrm{cone}\,(v,\mathcal{D})$. Then,  $V(G^{\mathcal{C}})=V(\mathcal{C})=V(\mathcal{D})\cup\{ v\} = V(G^{\mathcal{D}})\cup\{ v\} = V(\mathrm{cone}\,(v,G^{\mathcal{D}}))$. Let $\{ i,j\}\in E(G^{\mathcal{C}})$. Then, there exists $e\in E(\mathcal{C})$, such that $\{ i,j\}\subset e$. 
Now $e\in E(\mathcal{C})=E(\mathrm{cone}\,(v,\mathcal{D}))$ implies that 
either $e=\{ v,k\}$, where $k\in V(\mathcal{D})$ or $e\in E(\mathcal{D})$. 
For the first case, $e=\{ v,k\}=\{ i,j\}$. Now $k\in V(\mathcal{D})=V(G^{\mathcal{D}})$ implies that $\{ i,j\}=\{ v,k\}\in E(\mathrm{cone}\,(v,G^{\mathcal{D}}))$. For the second case, $\{ i,j\}\subset e\in E(\mathcal{D})$ implies that 
$\{ i,j\}\in E(G^{\mathcal{D}})$, so $\{ i,j\}\in E(\mathrm{cone}\,(v,G^{\mathcal{D}}))$. Therefore $E(G^{\mathcal{C}})\subseteq E(\mathrm{cone}\,(v,G^{\mathcal{D}}))$.
Again, let $\{ i,j \}\in E(\mathrm{cone}\,(v,G^{\mathcal{D}}))$. Then, 
either $\{ i,j\}=\{ v,k\}$ with $k\in V(G^{\mathcal{D}})$ or 
$\{ i,j\}\in E(G^{\mathcal{D}})$. For $\{ i,j\}=\{ v,k\}\,, k\in V(G^{\mathcal{D}})=V(\mathcal{D})$ we have $\{ i,j\}\in E(\mathrm{cone}\,(v,\mathcal{D}))=E(\mathcal{C})$, so $\{ i,j\} \in E(G^{\mathcal{C}})$. Now if $\{ i,j\}\in E(G^{\mathcal{D}})$, then there exists $e\in E(\mathcal{D})$ such that $\{ i,j\}\subset e$. Since $e\in E(\mathcal{D})$, we also have $e\in E(\mathcal{C})$ which 
implies that $\{ i,j\}\in E(G^{\mathcal{C}})$. Hence $E(G^{\mathcal{C}})=E(\mathrm{cone}\,(v,G^{\mathcal{D}}))$, 
and so $G^{\mathcal{C}}= \mathrm{cone}\,(v,G^{\mathcal{D}})$.
\end{proof}

\begin{example}\label{coneex}{\rm 
Let $\mathcal{D}$ be a clutter with vertex set $V(\mathcal{D})=\{ 1,2,3,4,5,6\}$ and the edge set $E(\mathcal{D})=\{\{ 1,2,4\},\{ 2,4,6\},\{ 4,5\},\{ 1,3,6\}\}$. Consider $\mathcal{C}=\mathrm{cone}\,(7,\mathcal{D})$. Then $E(\mathcal{C})=\{\{ 1,7\},\{ 2,7\},\{ 3,7\},\{ 4,7\},\{ 5,7\},\{ 6,7\},$\\$\{ 1,2,4\},\{ 2,4,6\}, \{ 4,5\},\{ 1,3,6\}\}.$ Let $\mathcal{D}^{\prime}$ be a clutter such that $V(\mathcal{D}^{\prime})=V(\mathcal{D})$ and $E(\mathcal{D}^{\prime})=\{ \{ 1,2\},\{ 2,4\},\{ 1,4\},\{ 2,6\},\{ 4,6\},\{ 4,5\},\{ 1,3\},\{ 3,$\\$6\},\{ 1,6\}\}.$ Then $G^{\mathcal{D}^{\prime}}=\mathcal{D}^{\prime}=G^{\mathcal{D}}$. Therefore we have $G^{\mathcal{C}}=\mathrm{cone}\,(7,G^{\mathcal{D}^{\prime}})$ but $\mathcal{C}\not=\mathrm{cone}\,(7,\mathcal{D}^{\prime})$. Therefore, the converse of the above proposition is not true.
}
\end{example}

\begin{lemma}\label{cone2}
Let $\mathcal{D}$ be a connected clutter, and let $\mathcal{C}= \mathrm{cone}\,(v,\mathcal{D})$. Then, $$ \mathscr{C}(\mathcal{C})=\{ T\subset V(\mathcal{C}) \mid T=T^{\prime}\cup \{ v\}\,\, \mbox{with}\,\, T^{\prime}\not=\phi\,\,\mbox{and}\,\, T^{\prime}\in\mathscr{C}(\mathcal{D})\}\cup\{\phi\}.$$
Moreover, $\mathrm{height}\, P_{T} = \mathrm{height}\, P_{T^{\prime}} + 2$, for all $T\not=\phi$.
\end{lemma}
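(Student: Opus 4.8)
\medskip

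The plan is to transport everything to the associated graph and then quote the cone result for graphs from \cite{raufrin}, exactly as in the preceding lemmas of this section. By Proposition \ref{cone1} the hypothesis $\mathcal{C}=\mathrm{cone}\,(v,\mathcal{D})$ gives $G^{\mathcal{C}}=\mathrm{cone}\,(v,G^{\mathcal{D}})$, and by Proposition \ref{prop1} the connectedness of $\mathcal{D}$ passes to $G^{\mathcal{D}}$. Since $P_{T}(\mathcal{C})=P_{T}(G^{\mathcal{C}})$ and $\mathscr{C}(\mathcal{C})=\mathscr{C}(G^{\mathcal{C}})$ (and likewise for $\mathcal{D}$), both the description of $\mathscr{C}(\mathcal{C})$ and the height identity become the corresponding statements for the graph cone $\mathrm{cone}\,(v,G^{\mathcal{D}})$, which are established in the cone section of \cite{raufrin}.

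If instead one wants a self-contained argument, I would work directly from the cut-point description of $\mathscr{C}$ recorded in Lemma \ref{lem1} and \cite{hhhrkara}. Write $c(S)$ for the number of connected components of the subgraph of $G^{\mathcal{C}}$ induced on the complement of $S$; then $T\in\mathscr{C}(\mathcal{C})$ iff $T=\emptyset$ or $c(T\setminus\{i\})<c(T)$ for every $i\in T$. The decisive observation is that in a cone the apex $v$ is adjacent to every other vertex, so deleting any set avoiding $v$ leaves a connected graph; hence $c(S)=1$ whenever $v\notin S$, while deleting $v$ recovers $G^{\mathcal{D}}$, so $c(T'\cup\{v\})=c_{G^{\mathcal{D}}}(T')$ for every $T'\subseteq V(\mathcal{D})$. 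Consequently, if $v\notin T$ and $T\neq\emptyset$ then $c(T)=1$ and no $i\in T$ can satisfy the cut condition, so $\emptyset$ is the only member of $\mathscr{C}(\mathcal{C})$ missing $v$; and if $T=T'\cup\{v\}$, the cut condition at the vertices of $T'$ is literally the cut condition for $T'$ in $\mathcal{D}$, while the cut condition at $v$ reads $1=c(T')<c(T)=c_{G^{\mathcal{D}}}(T')$.

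The step I expect to be the only real point is checking that this extra apex condition $c_{G^{\mathcal{D}}}(T')\geq 2$ imposes nothing new: for a nonempty $T'\in\mathscr{C}(\mathcal{D})$ one picks any $i\in T'$ and uses $c_{G^{\mathcal{D}}}(T')>c_{G^{\mathcal{D}}}(T'\setminus\{i\})\geq 1$ to force $c_{G^{\mathcal{D}}}(T')\geq 2$ automatically, whereas $T'=\emptyset$ gives $c_{G^{\mathcal{D}}}(\emptyset)=1$ and is excluded. This yields $\mathscr{C}(\mathcal{C})=\{T'\cup\{v\}\mid \emptyset\neq T'\in\mathscr{C}(\mathcal{D})\}\cup\{\emptyset\}$, as claimed. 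Finally the height formula drops out of the standard expression $\mathrm{height}\,P_{S}=\vert V\vert+\vert S\vert-c(S)$ for these minimal primes: with $\vert V(\mathcal{C})\vert=\vert V(\mathcal{D})\vert+1$, $\vert T\vert=\vert T'\vert+1$ and $c(T)=c_{G^{\mathcal{D}}}(T')$, the two unit increments add while $c$ is unchanged, giving $\mathrm{height}\,P_{T}=\mathrm{height}\,P_{T'}+2$.
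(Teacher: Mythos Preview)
Your first paragraph is precisely the paper's own proof: it invokes Proposition~\ref{cone1} to pass to $G^{\mathcal{C}}=\mathrm{cone}(v,G^{\mathcal{D}})$, uses the identifications $\mathscr{C}(\mathcal{C})=\mathscr{C}(G^{\mathcal{C}})$ and $P_{T}(\mathcal{C})=P_{T}(G^{\mathcal{C}})$, and then quotes \cite{raufrin} (Lemma~3.1). The additional self-contained argument you sketch via the cut-point characterization and the height formula $\mathrm{height}\,P_{S}=\lvert V\rvert+\lvert S\rvert-c(S)$ is also correct and goes beyond what the paper records, effectively reproving the cited graph lemma.
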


\proof
It is given that $\mathcal{C}= \mbox{cone}\,(v,\mathcal{D})$, 
therefore $G^{\mathcal{C}}= \mbox{cone}\,(v,G^{\mathcal{D}})$. From \cite{raufrin} (Lemma 3.1), we have
\begin{align*}
\mathscr{C}(\mathcal{C})&=\mathscr{C}(G^{\mathcal{C}})\\ &=\{ T\subset V(G^{\mathcal{C}}) \mid T=T^{\prime}\cup \{ v\}\,\, \mbox{with}\,\, 
T^{\prime}\not=\phi\,\,\mbox{and}\,\, T^{\prime}\in\mathscr{C}(G^{\mathcal{D}})\}\cup\{\phi\} \\ &= \{ T\subset V(\mathcal{C}) \mid T=T^{\prime}\cup \{ v\}\,\, \mbox{with}\,\, T^{\prime}\not=\phi\,\, \mbox{and}\,\, T^{\prime}\in\mathscr{C}(\mathcal{D})\}\cup\{\phi\}.
\end{align*}
Moreover,
\begin{align*}
 \mathrm{height}\, P_{T}(\mathcal{C})=\mbox{height}\, P_{T}(G^{\mathcal{C}})& = \mathrm{height}\, P_{T^{\prime}}(G^{\mathcal{D}}) + 2,\,\,\forall\,\, T\not=\phi\\&= \mathrm{height}\, P_{T^{\prime}}(\mathcal{D}) + 2,\,\,\forall\,\, T\not=\phi. \qed
\end{align*}

\begin{corollary}\label{cone3}
Let $\mathcal{D}$ be a connected clutter and let $\mathcal{C}= \mathrm{cone}\,(v,\mathcal{D})$, with $\vert V(\mathcal{C})\vert=n$. Then $\mathrm{dim}\,S/J_{\mathcal{C}}=\mathrm{max}\{ n+1, \mathrm{dim}\,S^{\prime}/J_{\mathcal{D}}\}$, 
where $S=K[\{ x_{i},y_{i} \mid i\in V(\mathcal{C})\}]$ and 
$S^{\prime}=K[\{ x_{i},y_{i} \mid i\in V(\mathcal{D})\}].$
\end{corollary}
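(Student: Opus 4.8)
The plan is to compute $\dim S/J_{\mathcal{C}}$ through its minimal primes and then substitute the structural description from Lemma \ref{cone2}. Because $S$ is a polynomial ring over a field, it is a catenary domain, so $\dim S/P = \dim S - \mathrm{height}\,P$ for every prime $P$; combining this with the fact that the dimension of a quotient is the maximum of the dimensions of its minimal primes, and with Lemma \ref{lem1}, which identifies those minimal primes as the $P_{T}(\mathcal{C})$ for $T \in \mathscr{C}(\mathcal{C})$, we obtain
\[
\dim S/J_{\mathcal{C}} = \max_{T \in \mathscr{C}(\mathcal{C})} \dim S/P_{T}(\mathcal{C}) = \max_{T \in \mathscr{C}(\mathcal{C})} \left( 2n - \mathrm{height}\,P_{T}(\mathcal{C}) \right).
\]
By Lemma \ref{cone2}, the index set $\mathscr{C}(\mathcal{C})$ splits into the single set $\emptyset$ and the family of sets $T = T' \cup \{v\}$ with $\emptyset \neq T' \in \mathscr{C}(\mathcal{D})$, and I would treat these two parts separately.

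For $T = \emptyset$, Proposition \ref{cone1} gives $G^{\mathcal{C}} = \mathrm{cone}(v, G^{\mathcal{D}})$, which is connected, so $P_{\emptyset}(\mathcal{C})$ is the binomial edge ideal of the complete graph on $n$ vertices and has height $n-1$; hence $\dim S/P_{\emptyset}(\mathcal{C}) = n+1$. For $T = T' \cup \{v\}$, Lemma \ref{cone2} supplies $\mathrm{height}\,P_{T}(\mathcal{C}) = \mathrm{height}\,P_{T'}(\mathcal{D}) + 2$. Since $S$ has $2n$ variables while $S'$ has $2(n-1)$ variables, a direct substitution gives
\[
\dim S/P_{T}(\mathcal{C}) = 2n - \mathrm{height}\,P_{T'}(\mathcal{D}) - 2 = 2(n-1) - \mathrm{height}\,P_{T'}(\mathcal{D}) = \dim S'/P_{T'}(\mathcal{D}).
\]
Taking the maximum over both parts yields $\dim S/J_{\mathcal{C}} = \max\{n+1,\, M\}$, where $M = \max\{\dim S'/P_{T'}(\mathcal{D}) : \emptyset \neq T' \in \mathscr{C}(\mathcal{D})\}$.

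It remains to match this with $\max\{n+1, \dim S'/J_{\mathcal{D}}\}$. The maximum defining $\dim S'/J_{\mathcal{D}}$ ranges over all of $\mathscr{C}(\mathcal{D})$, including $T' = \emptyset$, whose term is $\dim S'/P_{\emptyset}(\mathcal{D}) = n$ because $\mathcal{D}$ is connected; thus $\dim S'/J_{\mathcal{D}} = \max\{n, M\}$ and consequently $\max\{n+1, \dim S'/J_{\mathcal{D}}\} = \max\{n+1, n, M\} = \max\{n+1, M\} = \dim S/J_{\mathcal{C}}$, which is the asserted formula. The one delicate point, and the step I would flag as the main obstacle, is exactly this bookkeeping around the empty set: Lemma \ref{cone2} deliberately omits $T' = \emptyset$ from the ``$T' \cup \{v\}$'' family, so one must verify that the corresponding discarded value $n$ is dominated by the $T = \emptyset$ contribution $n+1$, ensuring that replacing $M$ by $\dim S'/J_{\mathcal{D}}$ on the right-hand side introduces no spurious term. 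Alternatively, since $J_{\mathcal{C}} = J_{G^{\mathcal{C}}}$, $J_{\mathcal{D}} = J_{G^{\mathcal{D}}}$, and $G^{\mathcal{C}} = \mathrm{cone}(v, G^{\mathcal{D}})$, the statement is a direct transcription of the cone dimension formula for graphs in \cite{raufrin} and may simply be cited; the computation above merely makes the mechanism explicit.
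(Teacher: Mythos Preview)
Your proof is correct. The paper's own proof is precisely the one-line alternative you mention at the end: invoke Proposition \ref{cone1} to get $G^{\mathcal{C}} = \mathrm{cone}(v, G^{\mathcal{D}})$, cite Corollary 3.2 of \cite{raufrin} for the graph formula, and finish via $J_{\mathcal{C}} = J_{G^{\mathcal{C}}}$ and $J_{\mathcal{D}} = J_{G^{\mathcal{D}}}$. Your main argument instead unpacks that citation by computing directly from the minimal-prime description in Lemma \ref{cone2}; this is exactly how \cite{raufrin} proves its Corollary 3.2 in the first place, so you are effectively reproving the cited result rather than taking a genuinely different route. The bookkeeping you flag around $T' = \emptyset$ is handled correctly; the only small omission is the degenerate case $\mathscr{C}(\mathcal{D}) = \{\emptyset\}$ (i.e.\ $\mathcal{D}$ complete), where your $M$ is an empty maximum, but the conclusion $\dim S/J_{\mathcal{C}} = n+1 = \max\{n+1, n\}$ still holds trivially.
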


\begin{proof}
Since $\mathcal{C}= \mathrm{cone}\,(v,\mathcal{D})$ defines $G^{\mathcal{C}}= \mathrm{cone}\,(v,G^{\mathcal{D}})$, from \cite{raufrin} (corollary 3.2) we have 
$ \mathrm{dim}\,S/J_{G^{\mathcal{C}}}=\mathrm{max}\{ n+1, \mathrm{dim}\,S^{\prime}/J_{G^{\mathcal{D}}}\}$. Now the proof follows from the 
facts that $J_{G^{\mathcal{C}}}=J_{\mathcal{C}}$ and $J_{G^{\mathcal{D}}}=J_{\mathcal{D}}$.
\end{proof}

\begin{theorem}\label{cone4}
Let $\mathcal{D}$ be a connected clutter and assume that $J_{\mathcal{D}}$ is unmixed. Let $\mathcal{C} =\mathrm{cone}\,(v,\mathcal{D})$. Then the following conditions are equivalent:
\begin{enumerate}
\item[(a)] $\mathcal{D}$ is a complete clutter;
\item[(b)] $J_{\mathcal{C}}$ is unmixed.
\end{enumerate}
If the equivalent conditions hold, then $J_{\mathcal{C}}$ is Cohen-Macaulay.
\end{theorem}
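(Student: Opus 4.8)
The plan is to push the entire statement down to the associated graph and read it off the graph-theoretic cone theorem. By Proposition~\ref{cone1} we have $G^{\mathcal{C}} = \mathrm{cone}(v, G^{\mathcal{D}})$, and by Remark~\ref{re1} unmixedness of $J_{\mathcal{D}}$ and of $J_{\mathcal{C}}$ is equivalent to unmixedness of $J_{G^{\mathcal{D}}}$ and $J_{G^{\mathcal{C}}}$ respectively. The only genuinely non-formal translation is that condition~(a) coincides with the graph condition ``$G^{\mathcal{D}}$ is complete'': using $\Delta(\mathcal{D}) = \Delta(G^{\mathcal{D}})$ from Proposition~\ref{glu1}, the set $V(\mathcal{D})$ is a clique of $\mathcal{D}$ exactly when it is a clique of $G^{\mathcal{D}}$, so $\mathcal{D}$ is a complete clutter iff $G^{\mathcal{D}}$ is a complete graph. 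Once this dictionary is in place, (a)~$\Leftrightarrow$~(b) and the closing Cohen--Macaulay assertion follow verbatim from the cone theorem for graphs in \cite{raufrin}, applied to the connected graph $G^{\mathcal{D}}$ whose binomial edge ideal is unmixed.

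I would, however, exhibit the combinatorial content directly from the lemmas already available, since it is short. By Lemma~\ref{cone2}, every nonempty $T \in \mathscr{C}(\mathcal{C})$ has the form $T = T' \cup \{v\}$ with $T' \in \mathscr{C}(\mathcal{D})$ nonempty, and deleting the apex $v$ from the cone returns $G^{\mathcal{D}}$, so the component count satisfies $c(T) = c(T')$. The unmixedness criterion of Lemma~\ref{glu7} applied to $\mathcal{C}$ demands $c(T) = \abs{T} + 1 = \abs{T'} + 2$, while the same criterion applied to the unmixed $\mathcal{D}$ gives $c(T') = \abs{T'} + 1$. These two equalities are contradictory, so $J_{\mathcal{C}}$ can be unmixed only when $\mathscr{C}(\mathcal{D})$ contains no nonempty set, i.e. $\mathscr{C}(\mathcal{D}) = \{\phi\}$. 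By Proposition~\ref{glu3} this means every vertex of $\mathcal{D}$ is free in $\Delta(\mathcal{D})$; for a connected clutter the facets are then pairwise vertex-disjoint, and connectedness forces a single facet, so $\mathcal{D}$ is complete. This proves (b)~$\Rightarrow$~(a). Conversely, if $\mathcal{D}$ is complete then every vertex lies in its unique facet, so $\mathscr{C}(\mathcal{D}) = \{\phi\}$ and hence $\mathscr{C}(\mathcal{C}) = \{\phi\}$ by Lemma~\ref{cone2}; the criterion of Lemma~\ref{glu7} is then satisfied trivially and $J_{\mathcal{C}}$ is unmixed.

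For the final assertion, under the equivalent conditions $G^{\mathcal{C}} = \mathrm{cone}(v, G^{\mathcal{D}})$ is a complete graph, so $J_{\mathcal{C}} = J_{G^{\mathcal{C}}}$ is the ideal of $2 \times 2$ minors of a generic $2 \times n$ matrix, a determinantal ideal and therefore Cohen--Macaulay (equivalently, one invokes the Cohen--Macaulay half of the graph cone theorem in \cite{raufrin}). The one place a real argument hides is the implication (b)~$\Rightarrow$~(a): the hypothesis that $J_{\mathcal{D}}$ is \emph{already} unmixed is precisely what makes the two component-count constraints collide, and I expect this collision to be the crux, with the passage from ``all vertices free'' to ``complete'' the only other point needing a brief combinatorial justification.
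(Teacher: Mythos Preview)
Your first paragraph is exactly the paper's proof: pass to the associated graph via Proposition~\ref{cone1} and Remark~\ref{re1}, note that ``$\mathcal{D}$ complete'' $\Leftrightarrow$ ``$G^{\mathcal{D}}$ complete'', and invoke \cite{raufrin}, Theorem~3.3. That alone suffices and matches the paper verbatim.

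Your second and third paragraphs go further than the paper by giving a self-contained argument inside the clutter framework, avoiding the black-box citation of \cite{raufrin}. The component-count collision $c(T')=|T'|+1$ versus $c(T)=|T'|+2$ is correct and is indeed the mechanism behind the graph theorem; your deduction that $\mathscr{C}(\mathcal{D})=\{\phi\}$ forces a single facet via Proposition~\ref{glu3} and connectedness is also sound. This direct route buys transparency---one sees \emph{why} the cone destroys unmixedness unless $\mathcal{D}$ is complete---at the cost of reproving what \cite{raufrin} already established. The paper opts for brevity and consistency with its overall strategy of reducing every clutter statement to its graph analogue; your alternative would make the result independent of that reference but is not needed for the paper's purposes.
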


\begin{proof}
(a) $\Longrightarrow$ (b): $\mathcal{D}$ is complete implies that $G^{\mathcal{D}}$ is complete and $\mathcal{C}= \mathrm{cone}\,(v,\mathcal{D})$ implies that  $G^{\mathcal{C}}= \mathrm{cone}\,(v,G^{\mathcal{D}})$. Then, from \cite{raufrin} (Theorem 3.3), we have $J_{G^{\mathcal{C}}}=J_{\mathcal{C}}$ is unmixed.
\medskip

\noindent (b) $\Longrightarrow$ (a): $J_{\mathcal{C}}=J_{G^{\mathcal{C}}}$ 
is unmixed implies that $G^{\mathcal{D}}$ is a complete graph by \cite{raufrin} (Theorem 3.3). Therefore, $\mathcal{D}$ is a complete clutter.
\end{proof}

\begin{lemma}\label{cone5}
Let $\mathcal{D}=\sqcup_{i=1}^{r}\mathcal{D}_{i}$ be a clutter, such that each $\mathcal{D}_{i}$ is connected component with $r\geq 1$, and let $\mathcal{C}= \mathrm{cone}\,(v,\mathcal{D})$. If $J_{\mathcal{C}}$ is unmixed then 
$\mathcal{D}$ has at most two connected components.
\end{lemma}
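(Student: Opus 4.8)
The plan is to reduce everything to the associated graph, exactly as in the rest of the paper, and then exhibit a single cut-point set that falsifies the unmixedness criterion of Lemma \ref{glu7} as soon as $r\geq 3$. By Proposition \ref{cone1} we have $G^{\mathcal{C}}=\mathrm{cone}\,(v,G^{\mathcal{D}})$, and since $\mathcal{D}=\sqcup_{i=1}^{r}\mathcal{D}_{i}$ is the disjoint union of its connected components, $G^{\mathcal{D}}=\sqcup_{i=1}^{r}G^{\mathcal{D}_{i}}$, where each $G^{\mathcal{D}_{i}}$ is connected by Proposition \ref{prop1}. Thus $G:=G^{\mathcal{C}}$ is the cone over a graph with exactly $r$ connected components, and $J_{\mathcal{C}}=J_{G}$ by Remark \ref{re1}.

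The candidate set is $T=\{v\}$. First I would check that $T\in\mathscr{C}(\mathcal{C})=\mathscr{C}(G^{\mathcal{C}})$. Since $G$ is a cone, it is connected, so $c(\phi)=1$; on the other hand, deleting $v$ from $G$ removes exactly the edges incident to $v$, leaving the induced subgraph $G_{V(\mathcal{D})}=G^{\mathcal{D}}$, which has $r$ components, so $c(\{v\})=r$. When $r\geq 2$ we have $c(\phi)=1<r=c(\{v\})$, which is precisely the cut-point condition of Corollary 3.9 of \cite{hhhrkara} (invoked through Lemma \ref{lem1}) applied to the single element $v$; hence $\{v\}\in\mathscr{C}(\mathcal{C})$.

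Now I would invoke Lemma \ref{glu7}: if $J_{\mathcal{C}}$ is unmixed, then $c(T)=\vert T\vert+1$ for every $T\in\mathscr{C}(\mathcal{C})$. Applying this to $T=\{v\}$ gives $r=c(\{v\})=\vert\{v\}\vert+1=2$. Together with the trivial case $r=1$, this forces $r\leq 2$, which is the assertion. The argument is short precisely because the heavy lifting has already been imported from \cite{raufrin} via the identity $J_{\mathcal{C}}=J_{G^{\mathcal{C}}}$; the only point requiring care, which I regard as the main obstacle, is the verification that $\{v\}$ genuinely lies in $\mathscr{C}(\mathcal{C})$, i.e.\ that $v$ is a cut vertex of the cone. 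This rests on two facts that must be stated cleanly: the cone $G^{\mathcal{C}}$ is connected (so $c(\phi)=1$), and deleting $v$ exactly recovers the disjoint union $G^{\mathcal{D}}$ with its full count of $r$ components.
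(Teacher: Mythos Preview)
Your proof is correct. Both you and the paper begin with the same reduction to the associated graph via Proposition \ref{cone1} and Remark \ref{re1}, obtaining $G^{\mathcal{C}}=\mathrm{cone}\,(v,G^{\mathcal{D}})$ with $G^{\mathcal{D}}$ having exactly $r$ connected components. At that point the paper simply invokes \cite{raufrin} (Lemma 3.4) as a black box to conclude $r\leq 2$, whereas you unpack that lemma explicitly: you exhibit the set $T=\{v\}$, verify $T\in\mathscr{C}(\mathcal{C})$ because the cone is connected while its deletion of $v$ has $r\geq 2$ components, and then apply the unmixedness criterion of Lemma \ref{glu7} to force $r=c(\{v\})=2$. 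This is precisely the argument underlying the cited result, so your approach is not genuinely different---just more self-contained. The gain is transparency (the reader sees exactly why three components are too many); the cost is a few extra lines compared to the paper's one-sentence citation.
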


\begin{proof}
$\mathcal{D}=\sqcup_{i=1}^{r}\mathcal{D}_{i}$ implies that 
$G^{\mathcal{D}}=\sqcup_{i=1}^{r}G^{\mathcal{D}_{i}}$, where 
$G^{\mathcal{D}_{i}}$ is connected. Also, 
$\mathcal{C}= \mathrm{cone}\,(v,\mathcal{D})$ implies that 
$G^{\mathcal{C}}= \mathrm{cone}\,(v,G^{\mathcal{D}})$. Now 
from \cite{raufrin} (Lemma 3.4) we can say that 
$J_{\mathcal{C}}=J_{G^{\mathcal{C}}}$ is unmixed implies that 
$G^{\mathcal{D}}$ has at most two connected components. Therefore, 
$\mathcal{D}$ has at most two connected components.
\end{proof}

\begin{lemma}\label{cone6}
Let $\mathcal{D}=\mathcal{D}_{1}\sqcup \mathcal{D}_{2}$, such that $\mathcal{D}_{1},\,\mathcal{D}_{2}$ are connected clutters, and let $\mathcal{C}= \mathrm{cone}\,(v,\mathcal{D})$. Then, 
$$ \mathscr{C}(\mathcal{C})=\{ T\subset V(\mathcal{C}) : T=T_{1}\cup T_{2}\cup \{ v\},\,T_{i}\in\mathscr{C}(\mathcal{D}_{i})\,\,for\,\, i=1,2\}\cup\{\phi\}.$$
Moreover, $\mathrm{height}\, P_{T} = \mathrm{height}\, P_{T_{1}}+\mathrm{height}\, P_{T_{2}} + 2$, for all $T\not=\phi$.
\end{lemma}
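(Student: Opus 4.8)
The plan is to follow the reduction strategy used throughout this section: replace every clutter object by its associated-graph counterpart and then quote the corresponding statement for graphs from \cite{raufrin}. First I would record the structural translations. Since $\mathcal{C}=\mathrm{cone}\,(v,\mathcal{D})$, Proposition \ref{cone1} gives $G^{\mathcal{C}}=\mathrm{cone}\,(v,G^{\mathcal{D}})$. Because $\mathcal{D}=\mathcal{D}_{1}\sqcup\mathcal{D}_{2}$ is a disjoint union, the associated graph splits as $G^{\mathcal{D}}=G^{\mathcal{D}_{1}}\sqcup G^{\mathcal{D}_{2}}$, and by Proposition \ref{prop1} each $G^{\mathcal{D}_{i}}$ is connected because each $\mathcal{D}_{i}$ is. Hence $G^{\mathcal{C}}$ is exactly a cone over a disjoint union of two connected graphs, which is the hypothesis of the graph analogue of the present lemma in \cite{raufrin}.

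Second, I would transport the set-theoretic and height data across this identification. From Section 2 we already have $\mathscr{C}(\mathcal{C})=\mathscr{C}(G^{\mathcal{C}})$, $\mathscr{C}(\mathcal{D}_{i})=\mathscr{C}(G^{\mathcal{D}_{i}})$ and $P_{T}(\mathcal{C})=P_{T}(G^{\mathcal{C}})$ (together with Lemma \ref{lem1}). Applying the graph lemma to $G^{\mathcal{C}}$ describes $\mathscr{C}(G^{\mathcal{C}})$ as $\{T=T_{1}\cup T_{2}\cup\{v\}:T_{i}\in\mathscr{C}(G^{\mathcal{D}_{i}})\}\cup\{\phi\}$ and gives $\mathrm{height}\,P_{T}=\mathrm{height}\,P_{T_{1}}+\mathrm{height}\,P_{T_{2}}+2$; rewriting $\mathscr{C}(G^{\mathcal{D}_{i}})$ as $\mathscr{C}(\mathcal{D}_{i})$ and each $P(G^{\bullet})$ as $P(\bullet)$ then yields the stated formulas.

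The combinatorial content that the cited graph lemma encodes, and which I would verify if a self-contained proof were wanted, is the following. The cone point $v$ is adjacent to every other vertex, so whenever $v\notin T$ the graph induced on $V(\mathcal{C})\setminus T$ remains connected through $v$ and no vertex of $T$ can be a cut point; thus every nonempty member of $\mathscr{C}(\mathcal{C})$ must contain $v$, which explains why $\phi$ is listed as a separate case. Writing $T=T'\cup\{v\}$ and $T_{i}=T'\cap V(\mathcal{D}_{i})$, deleting $v$ returns the two separate components, so the cut point property of $T$ decouples into those of $T_{1}$ and $T_{2}$, while $v$ is automatically a cut point precisely because $\mathcal{D}$ has two components. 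The height identity then drops out of $\mathrm{height}\,P_{T}=n+\vert T\vert-c(T)$ with $n=\vert V(\mathcal{D}_{1})\vert+\vert V(\mathcal{D}_{2})\vert+1$, $\vert T\vert=\vert T_{1}\vert+\vert T_{2}\vert+1$ and $c(T)=c(T_{1})+c(T_{2})$, the two additional $+1$ terms together with the split of $c(T)$ combining into the extra $+2$.

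The main obstacle I anticipate lies not in the translation but in the forcing step: one must use the two-component hypothesis (the exact feature contrasted with Lemma \ref{cone5}, where more components break unmixedness) to guarantee that $v$ is a cut point for every admissible $T$, and one must keep track of $\phi$ separately. Since all of this is packaged in the graph lemma of \cite{raufrin}, the reduction route bypasses the obstacle cleanly.
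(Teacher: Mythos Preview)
Your proposal is correct and follows essentially the same route as the paper's own proof: pass to associated graphs via Proposition \ref{cone1} and Proposition \ref{prop1}, use $\mathscr{C}(\mathcal{C})=\mathscr{C}(G^{\mathcal{C}})$ and $\mathscr{C}(\mathcal{D}_{i})=\mathscr{C}(G^{\mathcal{D}_{i}})$, and then invoke \cite{raufrin} (Lemma 3.5). The additional paragraphs unpacking the combinatorial content and the height formula go beyond what the paper writes, but they are correct and do not depart from the intended reduction strategy.
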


\begin{proof}
$\mathcal{D}=\mathcal{D}_{1}\sqcup \mathcal{D}_{2}$ imply $G^{\mathcal{D}}=G^{\mathcal{D}_{1}}\sqcup G^{\mathcal{D}_{2}}$, where $G^{\mathcal{D}_{1}}, \, G^{\mathcal{D}_{2}}$ connected and $\mathcal{C}= \mathrm{cone}\,(v,\mathcal{D})$ imply $G^{\mathcal{C}}= \mathrm{cone}\,(v,G^{\mathcal{D}})$. Since $\mathscr{C}(\mathcal{C})=\mathscr{C}(G^{\mathcal{C}})$ and $\mathscr{C}(\mathcal{D}_{i})=\mathscr{C}(G^{\mathcal{D}_{i}})$ for $i=1,2$, from \cite{raufrin} (Lemma 3.5) the result follows.
\end{proof}

\begin{corollary}\label{cone7}
Let $\mathcal{D}=\mathcal{D}_{1}\sqcup \mathcal{D}_{2}$ such that $\mathcal{D}_{1},\,\mathcal{D}_{2}$ are connected clutters, and let $\mathcal{C}= \mathrm{cone}\,(v,\mathcal{D})$. 
Then $ \mathrm{dim}\,S/J_{\mathcal{C}}=\mathrm{max}\{ \mathrm{dim}\,S_{1}/J_{\mathcal{D}_{1}} +\mathrm{dim}\,S_{2}/J_{\mathcal{D}_{2}}, n+1\}$, 
where $S=K[\{ x_{k},y_{k} \mid k\in V(\mathcal{C})\}]$ and $S_{i}=K[\{ x_{j},y_{j} \mid j\in V(\mathcal{D}_{i})\}]$, for $i=1,2$. 
\end{corollary}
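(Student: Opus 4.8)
The plan is to follow the reduction-to-graphs strategy that drives the entire section. From $\mathcal{D}=\mathcal{D}_{1}\sqcup\mathcal{D}_{2}$ with each $\mathcal{D}_{i}$ connected, I would first record that the associated graph splits in the same way, $G^{\mathcal{D}}=G^{\mathcal{D}_{1}}\sqcup G^{\mathcal{D}_{2}}$, with each $G^{\mathcal{D}_{i}}$ connected by Proposition~\ref{prop1}. Together with Proposition~\ref{cone1}, which gives $G^{\mathcal{C}}=\mathrm{cone}\,(v,G^{\mathcal{D}})$, this places us exactly in the hypotheses of the corresponding dimension formula for graphs in \cite{raufrin} (the corollary attached to their Lemma~3.5, which is the graph analogue of our Lemma~\ref{cone6}).

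Applying that graph result gives
$$\dim S/J_{G^{\mathcal{C}}}=\max\{\dim S_{1}/J_{G^{\mathcal{D}_{1}}}+\dim S_{2}/J_{G^{\mathcal{D}_{2}}},\,n+1\}.$$
I would then finish by substituting the identities $J_{\mathcal{C}}=J_{G^{\mathcal{C}}}$ and $J_{\mathcal{D}_{i}}=J_{G^{\mathcal{D}_{i}}}$ from Remark~\ref{re1}; since these are literal equalities of ideals, the quotient rings and hence their Krull dimensions coincide, yielding the stated formula verbatim.

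As an independent check I would also sketch a direct derivation from Lemma~\ref{cone6}, bypassing the citation. Writing $n_{i}=|V(\mathcal{D}_{i})|$ so that $n=n_{1}+n_{2}+1$, the identity $\dim S/J_{\mathcal{C}}=2n-\min_{T\in\mathscr{C}(\mathcal{C})}\mathrm{height}\,P_{T}$ breaks into two cases. The empty set contributes $2n-(n-1)=n+1$, because $G^{\mathcal{C}}$ is connected; for a nonempty $T=T_{1}\cup T_{2}\cup\{v\}$ the height formula of Lemma~\ref{cone6} rearranges to
$$2n-\mathrm{height}\,P_{T}=(2n_{1}-\mathrm{height}\,P_{T_{1}})+(2n_{2}-\mathrm{height}\,P_{T_{2}}),$$
and since $T_{1}$ and $T_{2}$ range independently over $\mathscr{C}(\mathcal{D}_{1})$ and $\mathscr{C}(\mathcal{D}_{2})$, maximising the right-hand side separates into $\dim S_{1}/J_{\mathcal{D}_{1}}+\dim S_{2}/J_{\mathcal{D}_{2}}$. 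Taking the larger of the two cases reproduces the asserted maximum.

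I do not anticipate any genuine obstacle, as the section is arranged so that clutter statements collapse onto their associated-graph counterparts. The only point needing a line of care is confirming that $G^{\mathcal{D}}=G^{\mathcal{D}_{1}}\sqcup G^{\mathcal{D}_{2}}$ is an honest disjoint union with no spurious cross-edges; this is immediate, since adjacency in $G^{\mathcal{D}}$ requires a common edge of $\mathcal{D}$, and in a disjoint union of clutters no edge meets both vertex sets.
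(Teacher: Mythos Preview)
Your primary argument is exactly the paper's proof: pass to associated graphs via Proposition~\ref{prop3}/\ref{prop1} and Proposition~\ref{cone1}, invoke \cite{raufrin} (Corollary~3.6), and transfer back using $J_{\mathcal{C}}=J_{G^{\mathcal{C}}}$ and $J_{\mathcal{D}_{i}}=J_{G^{\mathcal{D}_{i}}}$. Your supplementary direct computation from Lemma~\ref{cone6} is correct and a nice self-contained check, but the paper does not include it.
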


\begin{proof}
$\mathcal{D}=\mathcal{D}_{1}\sqcup \mathcal{D}_{2}$ implies that 
$G^{\mathcal{D}}=G^{\mathcal{D}_{1}}\sqcup G^{\mathcal{D}_{2}}$ and 
$G^{\mathcal{D}_{1}}, \, G^{\mathcal{D}_{2}}$ connected. Also, 
$\mathcal{C}= \mathrm{cone}\,(v,\mathcal{D})$ implies that 
$G^{\mathcal{C}}= \mathrm{cone}\,(v,G^{\mathcal{D}})$. Since $J_{\mathcal{C}}=J_{G^{\mathcal{C}}}$ and $J_{\mathcal{D}_{i}}=J_{G^{\mathcal{D}_{i}}}$, 
for $i=1,2$, from \cite{raufrin} (Corollary 3.6) the result follows.
\end{proof}

\begin{corollary}\label{cone8}
Let $\mathcal{D}=\mathcal{D}_{1}\sqcup \mathcal{D}_{2}$, such that 
$\mathcal{D}_{1},\,\mathcal{D}_{2}$ are connected clutters. Let 
$\mathcal{C}= \mathrm{cone}\,(v,\mathcal{D})$. The following 
conditions are equivalent:
\begin{enumerate}
\item[(a)] $J_{\mathcal{D}_{1}}$ and $J_{\mathcal{D}_{2}}$ are unmixed;
\item[(b)] $J_{\mathcal{C}}$ is unmixed.
\end{enumerate}
\end{corollary}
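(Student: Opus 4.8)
The pattern throughout this section is unmistakable: every result about clutters is reduced to the corresponding result about the associated graph via the identities $J_{\mathcal{C}} = J_{G^{\mathcal{C}}}$, $\mathscr{C}(\mathcal{C}) = \mathscr{C}(G^{\mathcal{C}})$, and the behaviour of $G^{\mathcal{C}}$ under the cone and disjoint-union constructions. Let me plan the proof of Corollary~\ref{cone8} in that spirit, checking that the relevant graph-level statement is indeed available from \cite{raufrin}.

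The plan is to follow the reduction scheme that governs every result in this section: translate the hypotheses from the clutters to their associated graphs, apply the matching statement for graphs from \cite{raufrin}, and pull the conclusion back through the identity $J_{\mathcal{C}}=J_{G^{\mathcal{C}}}$ of Remark \ref{re1}. Concretely, I would first note that $\mathcal{C}=\mathrm{cone}\,(v,\mathcal{D})$ forces $G^{\mathcal{C}}=\mathrm{cone}\,(v,G^{\mathcal{D}})$ by Proposition \ref{cone1}, while $\mathcal{D}=\mathcal{D}_{1}\sqcup\mathcal{D}_{2}$ gives $G^{\mathcal{D}}=G^{\mathcal{D}_{1}}\sqcup G^{\mathcal{D}_{2}}$ with each $G^{\mathcal{D}_{i}}$ connected by Proposition \ref{prop1}. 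Thus $G^{\mathcal{C}}$ is exactly a cone over a disjoint union of two connected graphs, and the desired equivalence for $G^{\mathcal{C}}$, $G^{\mathcal{D}_{1}}$, $G^{\mathcal{D}_{2}}$ is the graph analogue already established in \cite{raufrin} (Corollary 3.7, the statement immediately after the one used in Corollary \ref{cone7}). Since $J_{\mathcal{C}}=J_{G^{\mathcal{C}}}$ and $J_{\mathcal{D}_{i}}=J_{G^{\mathcal{D}_{i}}}$, the two conditions in (a) and (b) coincide with their graph counterparts and the equivalence transfers verbatim.

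The genuine content, should one want a self-contained argument rather than a citation, lies in combining Lemma \ref{cone6} with the unmixedness criterion of Lemma \ref{glu7}. Lemma \ref{cone6} describes $\mathscr{C}(\mathcal{C})$ as the collection of sets $T=T_{1}\cup T_{2}\cup\{v\}$ with $T_{i}\in\mathscr{C}(\mathcal{D}_{i})$, together with $\phi$, and Lemma \ref{glu7} says $J_{\mathcal{C}}$ is unmixed precisely when $c(T)=\vert T\vert+1$ for every such $T$. The key geometric observation is that, since the apex $v$ lies in $T$, removing $T$ from $G^{\mathcal{C}}$ severs the only bridges between the two cones, so the induced subgraph on the complement is the disjoint union of the induced subgraphs coming from $\mathcal{D}_{1}$ and $\mathcal{D}_{2}$. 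Writing $c_{\mathcal{C}}$, $c_{\mathcal{D}_{1}}$, $c_{\mathcal{D}_{2}}$ for the respective component counts, this yields $c_{\mathcal{C}}(T)=c_{\mathcal{D}_{1}}(T_{1})+c_{\mathcal{D}_{2}}(T_{2})$, while $\vert T\vert=\vert T_{1}\vert+\vert T_{2}\vert+1$.

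With this additive splitting in hand, the forward direction (a) $\Rightarrow$ (b) is immediate: summing $c_{\mathcal{D}_{i}}(T_{i})=\vert T_{i}\vert+1$ over $i=1,2$ gives $c_{\mathcal{C}}(T)=\vert T\vert+1$, and the case $T=\phi$ is covered by connectedness of $\mathcal{C}$. The converse (b) $\Rightarrow$ (a) is where the only real subtlety appears, since $c(T)\le\vert T\vert+1$ can fail for a single factor (a cut vertex may produce many components), so the splitting alone does not formally force each block to satisfy the criterion. I would resolve this using the fact that $\phi\in\mathscr{C}(\mathcal{D}_{i})$ for the connected clutter $\mathcal{D}_{i}$: specializing to $T_{2}=\phi$, for which $c_{\mathcal{D}_{2}}(\phi)=1$ by connectedness of $\mathcal{D}_{2}$, the unmixedness of $J_{\mathcal{C}}$ forces $c_{\mathcal{D}_{1}}(T_{1})=\vert T_{1}\vert+1$ for every $T_{1}\in\mathscr{C}(\mathcal{D}_{1})$, and symmetrically for $\mathcal{D}_{2}$. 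This ``set the other block to $\phi$'' device is the step I expect to need the most care, as it is precisely what prevents the converse from being a mere formal consequence of the additive identity.
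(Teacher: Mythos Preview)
Your primary approach---reduce to the associated graphs via Proposition~\ref{cone1} and Proposition~\ref{prop1}, then invoke \cite{raufrin} (Corollary~3.7), and transfer back through $J_{\mathcal{C}}=J_{G^{\mathcal{C}}}$---is exactly the paper's proof, which consists of the single line ``Follows from \cite{raufrin} (Corollary 3.7).''

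Your second, self-contained argument via Lemma~\ref{cone6} and Lemma~\ref{glu7} goes beyond what the paper provides. It is correct: the additive identity $c_{\mathcal{C}}(T)=c_{\mathcal{D}_{1}}(T_{1})+c_{\mathcal{D}_{2}}(T_{2})$ together with $\vert T\vert=\vert T_{1}\vert+\vert T_{2}\vert+1$ handles (a)~$\Rightarrow$~(b) directly, and your ``set the other block to $\phi$'' device for (b)~$\Rightarrow$~(a) is precisely the right move, since $\phi\in\mathscr{C}(\mathcal{D}_{i})$ for connected $\mathcal{D}_{i}$ and $c_{\mathcal{D}_{i}}(\phi)=1$. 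This unpacks what is hidden inside the graph-level citation and would make the corollary independent of \cite{raufrin} once Lemma~\ref{cone6} is in hand; the paper opts instead for brevity and leans entirely on the reference.
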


\begin{proof} Follows from \cite{raufrin} (Corollary 3.7).
\end{proof}

\begin{theorem}[\cite{raufrin}, Theorem 3.8] \label{coneraufrin}
Let $H=H_{1}\sqcup H_{2}$, such that $H_{1},\,H_{2}$ are connected graphs, 
and let $G= \mathrm{cone}\,(v,H)$. If $J_{H_{1}}$ and $J_{H_{2}}$ are 
Cohen-Macaulay, then $J_{G}$ is Cohen-Macaulay.
\end{theorem}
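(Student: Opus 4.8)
The plan is to prove that $S/J_G$ is Cohen--Macaulay by showing $\mathrm{depth}\,S/J_G = \dim S/J_G$, and the first move is to pin down the dimension. Put $n_i = \lvert V(H_i)\rvert$, so that $G$ has $n = n_1 + n_2 + 1$ vertices. Each $J_{H_i}$ is Cohen--Macaulay, hence unmixed; since $H_i$ is connected, $P_\emptyset$ is a minimal prime of $J_{H_i}$ of height $n_i-1$, so unmixedness forces every minimal prime of $J_{H_i}$ to have height $n_i-1$, whence $\dim S_i/J_{H_i} = 2n_i - (n_i-1) = n_i+1$. Substituting into the cone dimension formula for a disjoint union of two connected graphs (\cite{raufrin}, Corollary 3.6; compare the clutter version in Corollary \ref{cone7}) gives
$$\dim S/J_G = \max\{\dim S_1/J_{H_1} + \dim S_2/J_{H_2},\; n+1\} = \max\{(n_1+1)+(n_2+1),\; n+1\} = n+1,$$
the two entries of the maximum being equal. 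It therefore remains to prove $\mathrm{depth}\,S/J_G = n+1$.

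For the depth, the key observation is that $v$ is adjacent to every vertex of $H$, so completing the neighbourhood of $v$ turns $G$ into the complete graph $K_n$. The standard decomposition obtained by completing the neighbourhood of a vertex (see \cite{oh}), applied here and using $G\setminus v = H$, then gives
$$J_G = J_{K_n} \cap \big(J_H + (x_v,y_v)\big).$$
I would feed this into the Mayer--Vietoris short exact sequence
$$0 \longrightarrow S/J_G \longrightarrow S/J_{K_n} \oplus S/\big(J_H + (x_v,y_v)\big) \longrightarrow S/\big(J_{K_n} + (x_v,y_v)\big) \longrightarrow 0,$$
where the sum ideal on the right has been simplified using $J_H \subseteq J_{K_n}$.

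Now each of the three auxiliary modules is Cohen--Macaulay with a transparent dimension. The ring $S/J_{K_n}$ is the $2\times n$ generic determinantal ring, Cohen--Macaulay of dimension $n+1$. Because $H_1$ and $H_2$ sit on disjoint variable sets and $x_v,y_v$ are set to zero,
$$S/\big(J_H + (x_v,y_v)\big) \;\cong\; (S_1/J_{H_1}) \otimes_K (S_2/J_{H_2}),$$
a tensor product over $K$ of Cohen--Macaulay algebras, hence Cohen--Macaulay, of dimension $(n_1+1)+(n_2+1)=n+1$. Finally $S/(J_{K_n}+(x_v,y_v)) \cong K[x_i,y_i : i\in V(H)]/J_{K_{n-1}}$ is again generic determinantal, Cohen--Macaulay of dimension $n$. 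Applying the depth lemma to the sequence, and using that the middle term (a direct sum of two modules of depth $n+1$) has depth $n+1$ while the right-hand term has depth $n$, gives
$$\mathrm{depth}\,S/J_G \;\geq\; \min\{\,n+1,\; n+1\,\} \;=\; n+1.$$
Combined with $\dim S/J_G = n+1$ this forces $\mathrm{depth}\,S/J_G = \dim S/J_G$, so $J_G$ is Cohen--Macaulay.

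The main obstacle is the bookkeeping in this last step: the depth-lemma estimate is only useful because the cokernel term is genuinely Cohen--Macaulay of dimension exactly $n$ (so its depth is $n$ and contributes $n+1$ after the homological shift), so the crux is to identify all three auxiliary modules precisely and to verify their Cohen--Macaulayness. Invoking the neighbourhood-completion decomposition in the correct form — with the completed neighbourhood of $v$ equal to all of $K_n$ — is the other point requiring care; once these are in place, the remaining input is routine determinantal and tensor-product theory.
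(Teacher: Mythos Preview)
The paper does not supply its own proof of this statement: Theorem~\ref{coneraufrin} is simply quoted from \cite{raufrin} (Theorem~3.8) and used as input for Theorem~\ref{cone9}. There is therefore nothing in the present paper to compare your argument against.

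That said, your argument is correct and is in fact essentially the proof given in \cite{raufrin}. There the authors also use the Ohtani decomposition $J_G = J_{G_v}\cap\big((x_v,y_v)+J_{G\setminus v}\big)$ at the cone vertex $v$ (so $G_v=K_n$ and $G\setminus v=H$), feed it into the same Mayer--Vietoris sequence, identify the three auxiliary quotients as Cohen--Macaulay of dimensions $n+1$, $n+1$, $n$, and apply the depth lemma. Your bookkeeping is right: the unmixedness of each $J_{H_i}$ forces $\dim S_i/J_{H_i}=n_i+1$, the tensor-product identification $S/\big(J_H+(x_v,y_v)\big)\cong (S_1/J_{H_1})\otimes_K(S_2/J_{H_2})$ is valid because the variable sets are disjoint, and $S/(J_{K_n}+(x_v,y_v))\cong K[x_i,y_i:i\in V(H)]/J_{K_{n-1}}$ since setting $x_v=y_v=0$ kills exactly the minors involving column $v$. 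The depth inequality $\mathrm{depth}\,S/J_G\ge\min\{n+1,\,n+1\}$ then matches $\dim S/J_G=n+1$, and you are done.
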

\medskip

It is a question mentioned in \cite{raufrin}, whether the converse 
of Theorem \ref{coneraufrin}, mentioned above, is true or not. We 
prove in Theorem \ref{coneconverseraufrin} below that the converse 
is indeed true. Let us first recall a result from \cite{kusa}, which 
we would require for proving Theorem \ref{coneconverseraufrin}.

\begin{theorem}[\cite{kusa}, Theorem 3.9]\label{conekusa}
Let $G = \mathrm{cone}\,(v, H)$, where $H$ is a disconnected graph on $[n]$. 
Then 
$\mathrm{depth}\,(S/J_{G}) = \mathrm{min}\,\{ \mathrm{depth}\,(S_{1}/J_{H}), n + 2\}$, where $S_{1}=K[\{ x_{k},y_{k} : k\in V(H)\}]$ and $S=K[\{ x_{k},y_{k} : k\in V(G)\}]$.
\end{theorem}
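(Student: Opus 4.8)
The statement is about graphs, so the plan is to compute $\mathrm{depth}(S/J_G)$ by decomposing $J_G$ and comparing the pieces through a short exact sequence. The structural input I would use is that the apex $v$ of $G=\mathrm{cone}(v,H)$ satisfies $N_G(v)=V(H)=[n]$, so completing the neighbourhood of $v$ turns $G$ into the complete graph $K_{n+1}$ on $V(G)$. Hence Ohtani's decomposition lemma \cite{oh} at $v$ reads
\[
J_G = J_{K_{n+1}} \cap \big((x_v,y_v)+J_H\big).
\]
I would then record the three modules that enter: $S/J_{K_{n+1}}$ is Cohen--Macaulay of depth $n+2$; the quotient $S/\big((x_v,y_v)+J_H\big)\cong S_1/J_H$ has depth $d_H:=\mathrm{depth}(S_1/J_H)$; and since $J_H\subseteq J_{K_n}$ one has $J_{K_{n+1}}+(x_v,y_v)+J_H=J_{K_n}+(x_v,y_v)$, so the overlap quotient is $S_1/J_{K_n}$, again Cohen--Macaulay, of depth $n+1$.

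The engine is the Mayer--Vietoris sequence of $S$-modules attached to this intersection,
\[
0 \longrightarrow S/J_G \longrightarrow (S/J_{K_{n+1}})\oplus(S_1/J_H) \longrightarrow S_1/J_{K_n} \longrightarrow 0 .
\]
Denote the three terms by $A,B,C$, so that $\mathrm{depth}\,B=\min\{n+2,d_H\}$ and $\mathrm{depth}\,C=n+1$. The Depth Lemma immediately yields the lower bound $\mathrm{depth}\,A\ge\min\{\mathrm{depth}\,B,\mathrm{depth}\,C+1\}=\min\{d_H,n+2\}$. Moreover, in the two \emph{unbalanced} regimes the Depth Lemma already forces equality: if $d_H\le n$ then $\mathrm{depth}\,B<\mathrm{depth}\,C$, which gives $\mathrm{depth}\,A=\mathrm{depth}\,B=d_H$; and if $d_H\ge n+2$ then $\mathrm{depth}\,B>\mathrm{depth}\,C$, which gives $\mathrm{depth}\,A=\mathrm{depth}\,C+1=n+2$. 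In both cases $\mathrm{depth}\,A=\min\{d_H,n+2\}$, as claimed.

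The one genuinely delicate case, and the step I expect to be the main obstacle, is the balanced one $d_H=n+1$, where $\mathrm{depth}\,B=\mathrm{depth}\,C=n+1$ and the Depth Lemma alone only gives $\mathrm{depth}\,A\ge n+1$. A cheap a priori upper bound comes from the observation that $G$ is connected, so $\emptyset$ has the cut-point property and $P_\emptyset=J_{K_{n+1}}$ is a minimal, hence associated, prime of the radical ideal $J_G$ with $\dim S/P_\emptyset=n+2$; since $\mathrm{depth}\,A\le\dim S/\mathfrak p$ for every associated prime $\mathfrak p$, this leaves the dichotomy $\mathrm{depth}\,A\in\{n+1,n+2\}$. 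The whole difficulty is thus to exclude $\mathrm{depth}\,A=n+2$.

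To break the tie I would pass to local cohomology and examine the long exact sequence of $0\to A\to B\to C\to 0$ in degree $n+1$. Since $S/J_{K_{n+1}}$ has depth $n+2$, its local cohomology vanishes in degree $n+1$, so the relevant connecting map is the one induced by the surjection $S_1/J_H\twoheadrightarrow S_1/J_{K_n}$,
\[
\varphi\colon H^{\,n+1}_{\mathfrak m}(S_1/J_H)\longrightarrow H^{\,n+1}_{\mathfrak m}(S_1/J_{K_n}),
\]
and one has $H^{\,n+1}_{\mathfrak m}(A)=\ker\varphi$ in this range. Proving $\mathrm{depth}\,A=n+1$ therefore amounts to showing $\ker\varphi\neq 0$. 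The plan is to establish this by local duality, comparing $S_1/J_H$ with the canonical module of the Cohen--Macaulay complete-graph ring $S_1/J_{K_n}$ and using that $H$ is disconnected, so that $J_H\subsetneq J_{K_n}$ strictly and $S_1/J_H$ fails to be equidimensional; the non-surjectivity of the dual map then produces the required kernel. This is precisely the point where purely formal homological algebra is insufficient, and in a complete treatment it is most cleanly handled by tracking the extremal Betti number of $S/J_G$ sitting in the homological degree that computes depth.
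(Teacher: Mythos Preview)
The paper does not prove this theorem at all: it is quoted verbatim from Kumar--Sarkar \cite{kusa} as a black-box input for Theorem~\ref{coneconverseraufrin}, with no argument supplied. There is therefore no ``paper's own proof'' to compare against.

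On the merits of your attempt: the Ohtani decomposition at the apex and the resulting Mayer--Vietoris sequence are set up correctly, and your identifications $S/(I+J)\cong S_1/J_{K_n}$ and $S/J\cong S_1/J_H$ are right. The Depth Lemma analysis in the two unbalanced regimes $d_H\le n$ and $d_H\ge n+2$ is clean and gives the claimed equality. However, the balanced case $d_H=n+1$ is not actually proved. You reduce it to showing $\ker\varphi\neq 0$ for the map $\varphi\colon H^{n+1}_{\mathfrak m}(S_1/J_H)\to H^{n+1}_{\mathfrak m}(S_1/J_{K_n})$, which is a correct reformulation, but the paragraph that follows is a plan, not an argument: the strict containment $J_H\subsetneq J_{K_n}$ does not by itself force a nonzero kernel on top local cohomology, and the closing appeal to ``tracking the extremal Betti number'' defers rather than resolves the issue. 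Until that step is written out, the proof is incomplete exactly at the point you yourself flag as the main obstacle.
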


\begin{theorem}\label{coneconverseraufrin}
Let $H=H_{1}\sqcup H_{2}$, such that $H_{1}$ and $H_{2}$ are connected graphs, 
and let $G= \mathrm{cone}\,(v,H)$. If $J_{G}$ is Cohen-Macaulay then 
$J_{H_{1}}$ and $J_{H_{2}}$ are Cohen-Macaulay.
\end{theorem}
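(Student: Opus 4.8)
The plan is to play two known formulas against each other: Theorem \ref{conekusa} computes $\mathrm{depth}(S/J_G)$ for a cone over a \emph{disconnected} graph, and the dimension formula of \cite{raufrin} (Corollary 3.6, the graph analogue of Corollary \ref{cone7} above) computes $\dim(S/J_G)$. Since $J_G$ is Cohen-Macaulay these two numbers agree, and I expect a short inequality argument to force the Cohen-Macaulayness of each $H_i$ separately.

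First I would set $N = |V(H)|$, so $|V(G)| = N+1$, and abbreviate $S_{H_i} = K[\{x_k, y_k : k \in V(H_i)\}]$, $d_i = \dim(S_{H_i}/J_{H_i})$, and $e_i = \mathrm{depth}(S_{H_i}/J_{H_i})$ for $i = 1,2$. Because $H = H_1 \sqcup H_2$ has disjoint vertex sets and no edge joins the two parts, the generators of $J_H$ split according to the parts, giving $S_1/J_H \cong (S_{H_1}/J_{H_1}) \otimes_K (S_{H_2}/J_{H_2})$, where $S_1 = K[\{x_k, y_k : k \in V(H)\}]$. Both dimension and depth are additive under such a tensor product of graded $K$-algebras, so $\dim(S_1/J_H) = d_1 + d_2$ and $\mathrm{depth}(S_1/J_H) = e_1 + e_2$. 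As $H$ is disconnected, Theorem \ref{conekusa} applies and gives
\[
\mathrm{depth}(S/J_G) = \min\{e_1 + e_2,\; N+2\},
\]
while the dimension formula of \cite{raufrin} applied to $G = \mathrm{cone}(v, H_1 \sqcup H_2)$ gives
\[
\dim(S/J_G) = \max\{d_1 + d_2,\; N+2\}.
\]

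With these in hand, the argument is a sandwich. The right-hand side of the depth formula is at most $N+2$, while that of the dimension formula is at least $N+2$; the Cohen-Macaulay equality $\mathrm{depth}(S/J_G) = \dim(S/J_G)$ therefore forces both to equal $N+2$. This yields $e_1 + e_2 \ge N+2 \ge d_1 + d_2$, hence $e_1 + e_2 \ge d_1 + d_2$. Since $e_i \le d_i$ always holds, we also have $e_1 + e_2 \le d_1 + d_2$, so $e_1 + e_2 = d_1 + d_2$. Combined with $e_i \le d_i$ for each $i$, this forces $e_1 = d_1$ and $e_2 = d_2$, i.e. $J_{H_1}$ and $J_{H_2}$ are both Cohen-Macaulay.

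I do not expect a serious obstacle: once the depth and dimension formulas are cited, the conclusion is pure bookkeeping. The two points needing care are checking that the disconnectedness hypothesis of Theorem \ref{conekusa} holds (it does, as $H_1$ and $H_2$ are nonempty connected graphs) and justifying the additivity of depth under $\otimes_K$, which is the only non-formal input. The same scheme, with Lemma \ref{cone6} and Corollary \ref{cone7} in place of the graph statements, would yield the corresponding clutter version without change.
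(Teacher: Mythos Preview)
Your argument is correct and, like the paper's, hinges on the depth formula of Theorem~\ref{conekusa}. The difference lies on the dimension side: the paper first deduces from Cohen--Macaulayness that $J_G$ is unmixed, invokes \cite{raufrin} (Corollary~3.7) to see that each $J_{H_i}$ is unmixed, and then computes $\dim(S_1/J_H)$ and $\dim(S/J_G)$ \emph{exactly} (both equal $|V(G)|+1$) from the known heights of the minimal primes $P_\emptyset$. You instead cite the general dimension formula of \cite{raufrin} (Corollary~3.6) and run a min/max sandwich, never touching unmixedness. Your route is a little slicker---it needs one fewer citation from \cite{raufrin} and no height computation---while the paper's route has the minor bonus of making the common value $N+2$ explicit rather than squeezing it out of inequalities. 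Substantively the two proofs are the same: once Theorem~\ref{conekusa} is on the table, either bookkeeping finishes the job.
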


\begin{proof}
Let $\vert V(G)\vert=n$. Since $J_{G}$ is Cohen-Macaulay, $J_{G}$ is unmixed and so $J_{H_{1}}$ and $J_{H_{2}}$ are unmixed by \cite{raufrin} (Corollary 3.7). Therefore $\mathrm{dim}\,(S_{1}/J_{H})=2(n-1)-\mathrm{height}\,J_{H_{1}}-\mathrm{height}\,J_{H_{2}}=n+1$. Now, $\mathrm{depth}\,S/J_{G}=\mathrm{dim}\,S/J_{G}=n+1$, as $J_{G}$ is Cohen-Macaulay. Then, by Theorem \ref{conekusa}, we have 
$\mathrm{depth}\,(S_{1}/J_{H})\geq n+1$. So we have $\mathrm{depth}\,(S_{1}/J_{H})=\mathrm{dim}\,(S_{1}/J_{H})=n+1$, and hence $S_{1}/J_{H}$ is Cohen-Macaulay, equivalently $J_{H_{1}}$ and $J_{H_{2}}$ are Cohen-Macaulay.
\end{proof}

\begin{theorem}\label{cone9}
Let $\mathcal{D}=\mathcal{D}_{1}\sqcup \mathcal{D}_{2}$, such that 
$\mathcal{D}_{1},\,\mathcal{D}_{2}$ are connected clutters, and 
let $\mathcal{C}= \mathrm{cone}\,(v,\mathcal{D})$. Then, 
$J_{\mathcal{D}_{1}}$ and $J_{\mathcal{D}_{2}}$ are Cohen-Macaulay 
if and only if $J_{\mathcal{C}}$ is Cohen-Macaulay.
\end{theorem}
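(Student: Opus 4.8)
Looking at this, the final theorem (\ref{cone9}) is a clutter version of a graph result. Let me think about the strategy.

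The key observation throughout the paper is that $J_{\mathcal{C}} = J_{G^{\mathcal{C}}}$, so clutter results reduce to graph results. The final theorem combines the two directions for cones on disconnected clutters.

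Let me plan how to prove it.The plan is to reduce this clutter statement to the corresponding graph statements via the fundamental identity $J_{\mathcal{C}}=J_{G^{\mathcal{C}}}$ from Remark \ref{re1}, exactly as in the cone lemmas preceding it. First I would set up the translation to the associated graph. Since $\mathcal{D}=\mathcal{D}_{1}\sqcup\mathcal{D}_{2}$ is a disjoint union of two connected clutters, Proposition \ref{prop1} guarantees that $G^{\mathcal{D}_{1}}$ and $G^{\mathcal{D}_{2}}$ are connected, and passing to associated graphs gives $G^{\mathcal{D}}=G^{\mathcal{D}_{1}}\sqcup G^{\mathcal{D}_{2}}$. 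Because $\mathcal{C}=\mathrm{cone}\,(v,\mathcal{D})$, Proposition \ref{cone1} yields $G^{\mathcal{C}}=\mathrm{cone}\,(v,G^{\mathcal{D}})$. Thus the graph $G^{\mathcal{C}}$ is exactly a cone over the disjoint union $G^{\mathcal{D}_{1}}\sqcup G^{\mathcal{D}_{2}}$ of two connected graphs, so both Theorem \ref{coneraufrin} and Theorem \ref{coneconverseraufrin} apply verbatim with $H_{i}=G^{\mathcal{D}_{i}}$.

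For the reverse implication, I would assume $J_{\mathcal{C}}$ is Cohen-Macaulay. Using $J_{\mathcal{C}}=J_{G^{\mathcal{C}}}$, the ideal $J_{G^{\mathcal{C}}}=J_{\mathrm{cone}\,(v,G^{\mathcal{D}})}$ is then Cohen-Macaulay, and Theorem \ref{coneconverseraufrin} (the converse direction proved just above, for graphs) gives that $J_{G^{\mathcal{D}_{1}}}$ and $J_{G^{\mathcal{D}_{2}}}$ are both Cohen-Macaulay. Invoking $J_{\mathcal{D}_{i}}=J_{G^{\mathcal{D}_{i}}}$ once more transfers this back to $J_{\mathcal{D}_{1}}$ and $J_{\mathcal{D}_{2}}$, completing this direction.

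For the forward implication, I would assume $J_{\mathcal{D}_{1}}$ and $J_{\mathcal{D}_{2}}$ are Cohen-Macaulay. Then $J_{G^{\mathcal{D}_{1}}}$ and $J_{G^{\mathcal{D}_{2}}}$ are Cohen-Macaulay by the same identity, so Theorem \ref{coneraufrin} (Rauf--Rinaldo, Theorem 3.8) applied to the connected graphs $G^{\mathcal{D}_{1}},G^{\mathcal{D}_{2}}$ and their cone shows that $J_{G^{\mathcal{C}}}$ is Cohen-Macaulay; since $J_{\mathcal{C}}=J_{G^{\mathcal{C}}}$, we conclude that $J_{\mathcal{C}}$ is Cohen-Macaulay.

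I do not expect a genuine obstacle here: all of the combinatorial difficulty has already been absorbed into Theorem \ref{coneconverseraufrin}, whose proof for graphs relied on the depth computation of Theorem \ref{conekusa} together with the unmixedness dimension count. The only point requiring a little care is the bookkeeping in the first step — verifying that $\mathcal{D}_{1},\mathcal{D}_{2}$ being connected clutters really does make $G^{\mathcal{D}_{1}},G^{\mathcal{D}_{2}}$ connected graphs (so that the hypotheses of Theorems \ref{coneraufrin} and \ref{coneconverseraufrin} are literally met), but this is immediate from Proposition \ref{prop1}. Everything else is a direct transfer through the associated-graph dictionary.
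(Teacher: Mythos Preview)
Your proposal is correct and follows exactly the paper's approach: reduce to the associated graphs via $J_{\mathcal{C}}=J_{G^{\mathcal{C}}}$, $J_{\mathcal{D}_i}=J_{G^{\mathcal{D}_i}}$ and $G^{\mathcal{C}}=\mathrm{cone}\,(v,G^{\mathcal{D}})$, then invoke Theorem~\ref{coneraufrin} for one direction and Theorem~\ref{coneconverseraufrin} for the other. The paper's own proof is the same argument stated in two lines; your version simply spells out the bookkeeping (connectedness via Proposition~\ref{prop1}, the cone identity via Proposition~\ref{cone1}) more explicitly.
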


\begin{proof}
We have $J_{\mathcal{D}_{i}}=J_{G^{\mathcal{D}_{i}}}$, for $i=1,2$, $J_{\mathcal{C}}=J_{G^{\mathcal{C}}}$ and $G^{\mathcal{C}}= \mathrm{cone}\,(v,G^{\mathcal{D}})$. 
Therefore, the proof follows from Theorem \ref{coneraufrin} and Theorem \ref{coneconverseraufrin}.
\end{proof}

\bibliographystyle{amsalpha}

\begin{thebibliography}{A}

\bibitem{bms} Bolognini, D., Macchia, A., Strazzanti, F., 
\emph{Cohen-Macaulay binomial edge ideals and accessible graphs}, 
arXiv: 2101.03619v1 [math.AC], https://arxiv.org/abs/2101.03619.

\bibitem{crri} Crupi, M., Rinaldo, G., \emph{Binomial edge ideals with quadratic Gröbner bases}, The Electronic Journal of Combinatorics, 18: 1--13 (2011).

\bibitem{enhh} Ene, V., Herzog, J., Hibi, T., \emph{Cohen-Macaulay binomial edge ideals}, Nagoya Math. J., 204: 57--68 (2011).

\bibitem{hh} Herzog, J., Hibi, T., \emph{Distributive lattices, bipartite graphs and Alexander duality}, J. Alg. Combin., 22: 289--302 (2005).

\bibitem{hhhrkara} Herzog, J., Hibi, T., Hreinsdottir, F., Kahle, T., Rauh, J., \emph{Binomial edge ideals and conditional independence statements}, Advances in Applied Mathematics, 45: 317--333 (2010).

\bibitem{kusa} Kumar, A., Sarkar, R., \emph{Depth and Extremal Betti Number of Binomial Edge Ideals}, arXiv: 1904.00829, (2019).

\bibitem{oh} Ohtani, M., \emph{Graphs and ideals generated by some 2-minors}, Comm. Algebra, 39: 905--917 (2011).

\bibitem{raufrin} Rauf, A., Rinaldo, G., \emph{Construction of Cohen-Macaulay binomial edge ideals}, Communications in Algebra, 42: 238--252 (2014).

\bibitem{sivavi} Simis, A., Vasconcelos, W., Villarreal, R. H., \emph{On the ideal theory of graphs}, J. Algebra, 167(2): 389--416 (1994).

\bibitem{vi} Villarreal, R. H., \emph{Cohen-Macaulay graphs}, Manuscripta Math., 66: 277--293 (1990).

\bibitem{vil} Villarreal, R. H., \emph{Monomial Algebras, Monographs and Textbooks in Pure and Applied Mathematics} 238, Marcel Dekker, New York, (2001).

\end{thebibliography}

\end{document}